\newcommand{\abs}[1]{\left|#1\right|}
\newcommand{\bdry}[1]{\partial #1}
\newcommand{\bgset}[1]{\big\{#1\big\}}
\newcommand{\F}{{\mathcal F}}
\newcommand{\closure}[1]{\overline{#1}}
\newcommand{\incl}{\subset}
\newcommand{\M}{{\mathcal M}}
\newcommand{\N}{\mathbb N}
\newcommand{\norm}[2][]{\left\|#2\right\|_{#1}}
\newcommand{\PS}[1]{$(\text{PS})_{#1}$}
\newcommand{\pnorm}[2][]{\if #1'' \left|#2\right|_p \else \left|#2\right|_{#1} \fi}
\newcommand{\R}{\mathbb R}
\newcommand{\RP}{\R \text{P}}
\newcommand{\seq}[1]{\left(#1\right)}
\newcommand{\set}[1]{\left\{#1\right\}}
\newcommand{\Z}{\mathbb Z}
\newcommand{\isom}{\approx}
\DeclareMathOperator{\divg}{div}
\renewcommand{\H}{{\mathcal H}}
\newcommand{\wto}{\rightharpoonup}
\renewcommand{\o}{\text{o}}
\newtheorem{lemma}{Lemma}[section]
\newtheorem{proposition}[lemma]{Proposition}
\newtheorem{theorem}[lemma]{Theorem}
\theoremstyle{definition}
\theoremstyle{remark}
\theoremstyle{definition}
\newtheorem{definition}[lemma]{Definition}
\newenvironment{enumarab}{\begin{enumerate}
		
		}{\end{enumerate}}
\newenvironment{enumroman}{\begin{enumerate}
		
		}{\end{enumerate}}
\numberwithin{equation}{section}
\title[Double-phase problems via Morse theory]{Existence results for double-phase \\ problems via Morse theory}
\author[K.\ Perera]{Kanishka Perera}
\author[M.\ Squassina]{Marco Squassina}
\address[K. Perera]{Department of Mathematical Sciences
	\newline\indent
	Florida Institute of Technology
	\newline\indent
	150 W University Blvd, Melbourne, FL 32901, USA}
\email{kperera@fit.edu}
\address[M.\ Squassina]{Dipartimento di Informatica
	\newline\indent
	Universit\`a degli Studi di Verona
	\newline\indent
	Verona, Italy}
\email{marco.squassina@univr.it}
\subjclass[2010]{Primary 35J92, Secondary 35P30}
\keywords{Double phase problems, Musielak spaces, 
	Morse theory, cohomological local splitting}
\begin{document}

\begin{abstract}
We obtain nontrivial solutions for a class of double-phase problems using Morse theory. In the absence of a direct sum decomposition, we use a cohomological local splitting to get an estimate of the critical groups at zero.
\end{abstract}

\maketitle

%

\section{Introduction}
The study of energy functionals of the form
 \begin{equation} 
 	\label{doublef-energy}
 	u\mapsto \int_{\Omega} {\mathcal H}(x,|\nabla u(x)|)dx,\qquad
 	{\mathcal H}(x,t)=t^p+a(x)t^q,\quad q>p>1,\,\,\,  a(\cdot)\geq 0,
 \end{equation}
 where the integrand ${\mathcal H}$ 
 switches between two different elliptic behaviors
 has been intensively studied since the late eighties. This class of
 energies was introduced by Zhikov 
 to provide models of {\em strongly anisotropic} materials, see e.g.\ \cite{zhikov86,zhikov95,zhikov97} or the monograph \cite{zhikovBook}.
 Also,  the integrals of \eqref{doublef-energy} settle in the framework of 
 the so-called functionals with non-standard growth conditions,
 according to a terminology introduced by Marcellini \cite{Marcellini89,Marcellini91,cup5}.\
 In \cite{zhikovBook}, energies of the form \eqref{doublef-energy} are
 used in the context of homogenization and elasticity  and $a(\cdot)$ drives the geometry of a composite of two
 different materials with hardening powers $p$ and $q$.
 
 Significant progresses were recently achieved by the school of Mingione in the framework of regularity theory for  minimizers of \eqref{doublef-energy}, see 
 e.g.\ \cite{BarColMin1,BarColMin2,BarKuuMin,ColMing1,ColMing2}.
More recently, in \cite{CoSq}, a complete study on the existence and properties
of a sequence of variational eigenvalues related to ${\mathcal H}$, including
a Weyl type estimate for their growth has been performed.
\vskip2pt
\noindent
The purpose of this paper is to investigate the existence of solutions to the double phase problem
\begin{equation} \label{1}
\left\{\begin{aligned}
- \divg \left(|\nabla u|^{p-2}\, \nabla u + a(x)\, |\nabla u|^{q-2}\, \nabla u\right) & = f(x,u) && \text{in } \Omega\\
u & = 0 && \text{on } \bdry{\Omega},
\end{aligned}\right.
\end{equation}
where $\Omega\subset\R^N$ is a bounded domain with Lipschitz boundary, $N \ge 2$, $1 < p < q < N$,
\begin{equation} 
\label{p-q-cc}
q/p < 1 + 1/N,\quad \text{$a : \closure{\Omega} \to [0,\infty)$ is Lipschitz continuous},
\end{equation}
and $f$ is a Carath\'{e}odory function on $\Omega \times \R$ satisfying the growth condition
\begin{equation} 
\label{2}
|f(x,t)| \le C \left(|t|^{r-1} + 1\right) \quad \text{for a.a. } x \in \Omega \text{ and all } t \in \R,
\end{equation}
for some $1<r<p^\ast$ and $C > 0$, being $p^\ast = Np/(N - p)$
the critical Sobolev exponent of $W^{1,p}_0(\Omega)$. Assuming that $f(x,0) \equiv 0$, problem \eqref{1} has the trivial solution $u(x) =0$ and we study the critical groups of the associated variational functional at $0$, obtaining a nontrivial solution using Morse theory. In the absence of a direct sum decomposition, we use a cohomological local splitting to get an estimate of the critical groups.

Our main result is for the $q$-superlinear case for the problem
\begin{equation}
\label{probbb} 
\tag{$P$}
\left\{\begin{aligned}
- \divg \left(|\nabla u|^{p-2}\, \nabla u + a(x)\, |\nabla u|^{q-2}\, \nabla u\right) & = \lambda\, |u|^{p-2}\, u + |u|^{r-2}\, u + h(x,u) && \text{in } \Omega\\
u & = 0 && \text{on } \bdry{\Omega},
\end{aligned}\right.
\end{equation}
where $\lambda \in \R$ is a parameter, $q<r<p^\ast$ and $h$ is a Carath\'{e}odory function on $\Omega \times \R$ satisfying
\begin{equation}
\label{hgrowth} 
|h(x,t)| \le C \left(|t|^{\rho - 1} + |t|^{\sigma - 1}\right), 
\quad \text{for a.a. } x \in \Omega \text{ and all } t \in \R,
\end{equation}
for some $p < \sigma < \rho < r$ and $C > 0$. The notion 
of weak solution for problem \eqref{probbb} is formulated in a suitable Orlicz Sobolev space $W^{1,\H}_0(\Omega)$ that will be introduced in Section~\ref{prelim}.
\vskip4pt
\noindent
Let $\seq{\lambda_k}\subset\R^+$ be the sequence of (variational) eigenvalues of the $p$-Laplacian operator defined via cohomological index,
cf.\ formula \eqref{23}. Let us set
 $$
 G(x,t):=\frac{|t|^r}{r}+H(x,t),\quad\,\,\, x\in\Omega,\,\,\, t\in\R,
 $$
 being $H(x,t)=\displaystyle\int_0^t h(x,\tau)d\tau$.
The following is the main result of the paper.

\begin{theorem}
	\label{main}
	Assume that conditions~\eqref{p-q-cc} and \eqref{hgrowth} hold.
	Then problem \eqref{probbb} has a nontrivial weak solution $u\in W^{1,\H}_0(\Omega)$ in each of these cases:
	\begin{enumarab}
		\item $\lambda \notin \{\lambda_k\}_ {k \ge 1}$;
		\item for some $\delta > 0$, $G(x,t) \le 0$ for a.a. $x \in \Omega$ and $|t| \le \delta$;
		\item $G(x,t) \ge c\, |t|^s$ for a.a. $x \in \Omega$ and all $t \in \R$ for some $s \in (p,q)$ and $c > 0$.
	\end{enumarab}
\end{theorem}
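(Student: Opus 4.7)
The plan is to apply Morse theory to the $C^1$ energy functional
$$
I(u) = \int_\Omega \left(\frac{|\nabla u|^p}{p} + \frac{a(x)|\nabla u|^q}{q}\right)dx - \frac{\lambda}{p}\int_\Omega|u|^p\,dx - \int_\Omega G(x,u)\,dx
$$
on $W^{1,\H}_0(\Omega)$, whose critical points are precisely the weak solutions of \eqref{probbb}. The argument consists of verifying the Cerami condition, computing the critical groups at infinity from the superlinear term $|u|^r/r$, and computing $C^k(I,0)$ in each of the three cases via a cohomological local splitting; the discrepancy $C^k(I,0)\ne 0 = C^k(I,\infty)$ for some $k$ then yields a nontrivial critical point.

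For compactness, the exponents $\sigma,\rho<r$ make the $h$-term a lower order perturbation of the dominant superlinear piece. Combining $I(u_n)$ bounded with $I'(u_n)[u_n]=o(\|u_n\|)$, the combination $rI(u_n)-I'(u_n)[u_n]$ controls both $\int|\nabla u_n|^p$ and $\int a(x)|\nabla u_n|^q$ modulo lower order terms, and hence bounds $u_n$ in $W^{1,\H}_0(\Omega)$ because $r>q>p$. The compact embedding into $L^r(\Omega)$ ensured by \eqref{p-q-cc}, together with the $(S_+)$ property of the double-phase operator, then yields strong convergence of a subsequence. For the critical groups at infinity, the superlinearity $r>q$ forces $I(tu)\to-\infty$ as $t\to\infty$ for each $u\ne 0$ and, more quantitatively, $I(u)\to-\infty$ as $\|u\|\to\infty$; a negative pseudo-gradient flow then deforms any sufficiently negative sublevel onto the unit sphere of $W^{1,\H}_0(\Omega)$, which is contractible, so that $C^k(I,\infty)=0$ for every $k\ge 0$.

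The core of the proof is the cohomological local splitting at $0$. In each case the goal is to exhibit, in an arbitrarily small neighbourhood of $0$, a pair of symmetric subsets $A_k^\pm$ whose cohomological indices satisfy an index-jump inequality and on which $I$ is non-positive and strictly positive outside $0$ respectively, which forces $C^k(I,0)\ne 0$ for the corresponding $k\ge 1$. In case $(1)$, with $\lambda_k<\lambda<\lambda_{k+1}$ or $\lambda<\lambda_1$, the leading behaviour of $I$ near $0$ is the $p$-eigenvalue form $\frac{1}{p}\int|\nabla u|^p-\frac{\lambda}{p}\int|u|^p$, since the $q$-gradient, $|u|^r$ and $H$ contributions are all $o(\|u\|_{1,p}^p)$; one then takes $A_k^\pm$ from the sets realising the $p$-Laplacian index jump across $\lambda$. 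In case $(2)$, the non-positivity $G\le 0$ on small $u$ lifts $I$, preserving the positive branch of the splitting even when $\lambda$ coincides with some eigenvalue. In case $(3)$, the lower bound $G\ge c|u|^s$ with $s\in(p,q)$ makes $I$ strictly negative on small balls of any finite-dimensional subspace through the degree mismatch $s<q$, supplying the non-positive branch; the positive branch is then extracted from a subspace of complementary cohomological index.

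The main obstacle is the splitting in the resonant subcases of $(2)$ and $(3)$, where no linear direct sum decomposition of $W^{1,\H}_0(\Omega)$ is available and one must rely on the cohomological index alone to capture the local geometry of $I$ near $0$. The $p$-$q$ gap in the principal part forces a careful quantitative control of every lower order contribution against the $p$-eigenvalue form; this control is made possible precisely by the subcritical constraint \eqref{p-q-cc} and by the Lipschitz regularity of $a(\cdot)$, which underpins the continuity and compactness properties of the double-phase operator used throughout the argument.
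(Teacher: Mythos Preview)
Your overall strategy coincides with the paper's: verify a Palais--Smale type condition, show that the critical groups at infinity vanish via contractibility of a deep sublevel set, compute $C^k(\Phi,0)$ through a cohomological local splitting built from the $p$-Laplacian eigenvalue structure, and conclude by contradiction. Two technical points in your sketch are incorrect as stated and would need to be replaced by the arguments the paper actually uses.

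First, the claim that $I(u)\to-\infty$ as $\|u\|\to\infty$ is false: highly oscillatory sequences can satisfy $\|u_n\|\to\infty$ while $\|u_n\|_r$ remains bounded, in which case the positive gradient terms dominate and $I(u_n)\to+\infty$. Likewise, a negative pseudo-gradient flow does not carry a sublevel set onto the unit sphere. The paper instead observes, from the identity for $q\Phi(u)-\langle\Phi'(u),u\rangle$, that $\langle\Phi'(u),u\rangle<0$ on $\Phi^\alpha$ for $\alpha$ sufficiently negative; then $u\mapsto t(u)\,u$ with $t(u)=\inf\{t\ge 1:\Phi(tu)\le\alpha\}$ is a continuous retraction of $W^{1,\H}_0(\Omega)\setminus\{0\}$ onto $\Phi^\alpha$, and since the punctured space is contractible, so is $\Phi^\alpha$.

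Second, the assertion that the $q$-gradient contribution is $o(\|u\|_{1,p}^p)$ near $0$ is not true on all of $W^{1,\H}_0(\Omega)$, and this is precisely where the double-phase structure obstructs a naive splitting. On the positive cone $W_+=\{u: \int|\nabla u|^p\ge\lambda_{k+1}\int|u|^p\}$ the $q$-term is harmless because it is nonnegative. For the negative cone the paper invokes the Degiovanni--Lancelotti construction: the set $\{u:\int|\nabla u|^p\le\lambda_k\int|u|^p\}$ contains a symmetric cone $W_-$ with $i(W_-\setminus\{0\})=k$ whose $L^p$-unit sphere is \emph{bounded in $C^1(\closure{\Omega})$}. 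This $C^1$ bound yields $\int_\Omega a(x)|\nabla u|^q\,dx\le C\,\|u\|_p^q$ on $W_-$, which is what makes the $q$-term genuinely lower order there (since $q>p$) and allows $\Phi\le 0$ on $B_\rho\cap W_-$ in all three cases, including the resonant ones $(2)$ and $(3)$. Without this ingredient your argument for the non-positive branch does not close.
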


\noindent
To our knowledge this is the first existence result for double-phase problems 
\eqref{1} in the framework of Morse theory and it is obtained by analyzing 
the critical groups $H^q(\Phi^0 \cap U,\Phi^0 \cap U \setminus \set{0})$ of the associated 
energy functional $\Phi$ at zero, $q\in\N$.

\section{Preliminaries and proof}
\label{prelim}

\subsection{Variational setting}

The Musielak-Orlicz space $L^\H(\Omega)$ associated with the function
\[
\H : \Omega \times [0,\infty) \to [0,\infty), \quad (x,t) \mapsto t^p + a(x)\, t^q
\]
consists of all measurable functions $u : \Omega \to \R$ with the $\H$-modular
\[
\rho_\H(u) := \int_\Omega \H(x,|u|)\, dx < \infty,
\]
endowed with the Luxemburg norm
\[
\norm[\H]{u} := \inf \set{\gamma > 0 : \rho_\H(u/\gamma) \le 1}.
\]
The space $L^\H(\Omega)$ is a uniformly convex, and hence reflexive, Banach space. Denoting by $\norm[p]{\cdot}$ the norm in $L^p(\Omega)$ and by $L^q_a(\Omega)$ the space of all measurable functions $u : \Omega \to \R$ with the seminorm
\[
\norm[q,a]{u} := \left(\int_\Omega a(x)\, |u|^q\, dx\right)^{1/q} < \infty,
\]
we have the continuous embeddings 
$$
L^q(\Omega) \hookrightarrow L^\H(\Omega) \hookrightarrow L^p(\Omega) \cap L^q_a(\Omega),
$$ 
see \cite[Proposition 2.15 (i), (iv), (v)]{CoSq}. 
Since $\rho_\H(u/\norm[\H]{u}) = 1$ whenever $u \ne 0$, we have
\begin{equation} \label{4}
\min \set{\norm[\H]{u}^p,\norm[\H]{u}^q} \le \norm[p]{u}^p + \norm[q,a]{u}^q \le \max \set{\norm[\H]{u}^p,\norm[\H]{u}^q}, \quad \forall u \in L^\H(\Omega).
\end{equation}
The related Sobolev space $W^{1,\H}(\Omega)$ consists of all functions $u$ in $L^\H(\Omega)$ with $|\nabla u| \in L^\H(\Omega)$, normed by
\[
\norm[1,\H]{u} := \norm[\H]{u} + \norm[\H]{\nabla u},
\]
where $\norm[\H]{\nabla u} = \norm[\H]{|\nabla u|}$. The completion 
of $C^\infty_0(\Omega)$ in $W^{1,\H}(\Omega)$ is denoted by $W^{1,\H}_0(\Omega)$ and it can be equivalently renormed by
\[
\norm{u} := \norm[\H]{\nabla u}
\]
via a Poincar\'e-type inequality, cf.\ \cite[Proposition 2.18, (iv)]{CoSq}, under assumption~\eqref{p-q-cc}. The spaces $W^{1,\H}(\Omega)$ and $W^{1,\H}_0(\Omega)$ are uniformly convex, and hence reflexive, Banach spaces.\ The Sobolev embedding $W^{1,\H}_0(\Omega) \hookrightarrow L^r(\Omega)$ is compact since $r < p^\ast$,  cf.\ 
\cite[Proposition 2.15, (iii)]{CoSq}. We have
\begin{equation} \label{5}
\min \set{\norm{u}^p,\norm{u}^q} \le \norm[p]{\nabla u}^p + \norm[q,a]{\nabla u}^q \le \max \set{\norm{u}^p,\norm{u}^q}, \quad \forall u \in W^{1,\H}_0(\Omega),
\end{equation}
by virtue of \eqref{4}.
A weak solution of problem \eqref{1} is a function $u \in W^{1,\H}_0(\Omega)$ satisfying
\[
\int_\Omega \left(|\nabla u|^{p-2} + a(x)\, |\nabla u|^{q-2}\right) \nabla u \cdot \nabla v\, dx = \int_\Omega f(x,u)\, v\, dx, 
\quad \forall v \in W^{1,\H}_0(\Omega).
\]
Weak solutions coincide with critical points of the functional
\[
\Phi(u) = \int_\Omega \left[\frac{1}{p}\, |\nabla u|^p + \frac{a(x)}{q}\, |\nabla u|^q - F(x,u)\right] dx, \quad u \in W^{1,\H}_0(\Omega),
\]
where $F(x,t) = \displaystyle\int_0^t f(x,s)\, ds$, by the following proposition.

\begin{proposition}[$C^1$ energy]
	Assume that \eqref{2} holds. Then $\Phi$ is of class $C^1$ with
	\begin{equation} 
		\label{3}
	\langle \Phi'(u), v\rangle = \int_\Omega \left[\left(|\nabla u|^{p-2} + a(x)\, |\nabla u|^{q-2}\right) \nabla u \cdot \nabla v - f(x,u)\, v\right] dx,
	\end{equation}
	for every $u, v \in W^{1,\H}_0(\Omega)$.
\end{proposition}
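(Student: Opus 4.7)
The plan is to split $\Phi = I - J$, where
\[
I(u) := \int_\Omega \left[\frac{1}{p}|\nabla u|^p + \frac{a(x)}{q}|\nabla u|^q\right] dx, \qquad J(u) := \int_\Omega F(x,u)\, dx,
\]
and to verify that each piece is of class $C^1$ separately, with derivatives matching the two summands of \eqref{3}. The split is natural because $I$ depends on $\nabla u$ via the modular, while $J$ depends only on $u$ and is handled through the compact Sobolev embedding.

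For the potential term $J$, growth condition \eqref{2} gives $|F(x,t)|\le C(|t|^r/r+|t|)$ and $|f(x,t)|\le C(|t|^{r-1}+1)$. Since $r<p^\ast$, the embedding $W^{1,\H}_0(\Omega)\hookrightarrow L^r(\Omega)$ is continuous and in fact compact, so the Nemytskii operator $u\mapsto F(\cdot,u)$ maps $L^r(\Omega)$ continuously into $L^1(\Omega)$ and $u\mapsto f(\cdot,u)$ maps $L^r(\Omega)$ continuously into $L^{r'}(\Omega)$. Standard Nemytskii theory then yields that $J$ is $C^1$ on $W^{1,\H}_0(\Omega)$ with $\langle J'(u),v\rangle=\int_\Omega f(x,u)\,v\,dx$.

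For the modular part $I$, I would first compute the G\^ateaux derivative by differentiating under the integral: for each $u,v\in W^{1,\H}_0(\Omega)$, the function $t\mapsto \frac{1}{p}|\nabla u+t\nabla v|^p+\frac{a(x)}{q}|\nabla u+t\nabla v|^q$ is $C^1$ in $t$ with derivative $(|\nabla u+t\nabla v|^{p-2}+a(x)|\nabla u+t\nabla v|^{q-2})(\nabla u+t\nabla v)\cdot\nabla v$, and the obvious pointwise bound (together with Young's inequality and the fact that $|\nabla u|,|\nabla v|\in L^p(\Omega)$ and $a^{1/q}|\nabla u|,a^{1/q}|\nabla v|\in L^q(\Omega)$, cf.\ \eqref{5}) gives an integrable dominating function, so one obtains the expected formula for $\langle I'(u),v\rangle$. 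The key step, and the main technical point, is to show that $I'\colon W^{1,\H}_0(\Omega)\to (W^{1,\H}_0(\Omega))^\ast$ is \emph{continuous}; once this is established, G\^ateaux differentiability upgrades to Fr\'echet differentiability and $I\in C^1$.

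To verify continuity of $I'$, suppose $u_n\to u$ in $W^{1,\H}_0(\Omega)$, i.e., $\norm[\H]{\nabla u_n-\nabla u}\to 0$. By \eqref{5} applied to $u_n-u$, this implies $\norm[p]{\nabla u_n-\nabla u}\to 0$ and $\norm[q,a]{\nabla u_n-\nabla u}\to 0$. Passing to subsequences and using pointwise a.e.\ convergence of $\nabla u_n$ together with suitable $L^p$ and $L^q_a$ dominants (via Vitali's convergence theorem or the generalized dominated convergence theorem), the Nemytskii operators $u\mapsto |\nabla u|^{p-2}\nabla u$ and $u\mapsto a^{1/q'}|\nabla u|^{q-2}\nabla u$ are continuous from $W^{1,\H}_0(\Omega)$ into $L^{p'}(\Omega;\R^N)$ and $L^{q'}_{a^{1/q'}}(\Omega;\R^N)$ respectively. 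H\"older's inequality in the Musielak--Orlicz duality (or just the elementary bound $\int fg\,dx\le \norm[p]{f}\norm[p']{g}+\norm[q,a]{f}\norm[q',a]{g}$ applied with $f=\nabla v$ and the appropriate $g$, and then \eqref{5} to control $\norm[p]{\nabla v},\norm[q,a]{\nabla v}$ by $\norm{v}$) yields the required operator-norm convergence $\norm{I'(u_n)-I'(u)}_\ast\to 0$. Combining with the result for $J$ gives \eqref{3} and the $C^1$ regularity of $\Phi$.
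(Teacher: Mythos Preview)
Your proposal is correct and follows essentially the same route as the paper's proof: derive the G\^ateaux derivative formula \eqref{3} via the embeddings, then establish continuity of $\Phi'$ by using \eqref{5} to reduce convergence in $W^{1,\H}_0(\Omega)$ to convergence of $\nabla u_n$ in $L^p(\Omega)\cap L^q_a(\Omega)$ and of $u_n$ in $L^r(\Omega)$, and finally apply H\"older's inequality together with Nemytskii-type continuity (dominated convergence) for the maps $\xi\mapsto|\xi|^{p-2}\xi$, $\xi\mapsto a^{1/q'}|\xi|^{q-2}\xi$, and $t\mapsto f(x,t)$. The only differences are presentational: you split $\Phi=I-J$ and spell out the G\^ateaux-to-Fr\'echet upgrade, whereas the paper treats all three terms in a single H\"older estimate and is more terse about the derivative formula itself.
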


\begin{proof}
	In view of the embeddings mentioned above, \eqref{3} is clear. To see that $\Phi'$ is continuous, suppose that $u_j \to u$ in $W^{1,\H}_0(\Omega)$. For all $v \in W^{1,\H}_0(\Omega)$ with $\norm{v} = 1$, by the H\"{o}lder inequality,
	\begin{multline*}
	|\langle \Phi'(u_j) - \Phi'(u), v\rangle| \le \norm[p']{\abs{|\nabla u_j|^{p-2}\, \nabla u_j - |\nabla u|^{p-2}\, \nabla u}} \norm[p]{\nabla v}\\[5pt]
	+ \norm[q']{a(x)^{1/q'} \abs{|\nabla u_j|^{q-2}\, \nabla u_j - |\nabla u|^{q-2}\, \nabla u}} \norm[q,a]{\nabla v} + \norm[r']{f(x,u_j) - f(x,u)} \norm[r]{v},
	\end{multline*}
	where $s' = s/(s - 1)$ is the H\"{o}lder conjugate of $s$. Since $L^\H(\Omega) \hookrightarrow L^p(\Omega) \cap L^q_a(\Omega)$ and $W^{1,\H}_0(\Omega) \hookrightarrow L^r(\Omega)$, $\nabla u_j \to \nabla u$ in $L^p(\Omega) \cap L^q_a(\Omega)$, $u_j \to u$ in $L^r(\Omega)$, and $\norm[p]{\nabla v}$, $\norm[q,a]{\nabla v}$ and $\norm[r]{v}$ are uniformly bounded, the assertion follows from the dominated convergence theorem and \eqref{2}.
\end{proof}

\subsection{Palais-Smale condition}

The operator $A : W^{1,\H}_0(\Omega) \to (W^{1,\H}_0(\Omega))'$ defined by
\[
\langle A(u), v\rangle := \int_\Omega \left[\left(|\nabla u|^{p-2} + a(x)\, |\nabla u|^{q-2}\right) \nabla u \cdot \nabla v\right] dx, \quad u, v \in W^{1,\H}_0(\Omega),
\]
where $(W^{1,\H}_0(\Omega))'$ is the dual space of $W^{1,\H}_0(\Omega)$, has the following important property.

\begin{proposition} \label{Proposition 1}
	If $u_j \wto u$ in $W^{1,\H}_0(\Omega)$ and $A(u_j)\, (u_j - u) \to 0$, then $u_j \to u$ in $W^{1,\H}_0(\Omega)$.
\end{proposition}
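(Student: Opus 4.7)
The plan is to exploit the monotonicity of $A$ (which reflects the convexity of the associated energy integrand) together with the classical Simon-type inequalities for $\xi \mapsto |\xi|^{s-2}\xi$, and to conclude by applying the modular estimate \eqref{5} to $u_j - u$.

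First I would write
\[
\langle A(u_j) - A(u),\, u_j - u\rangle = \langle A(u_j),\, u_j - u\rangle - \langle A(u),\, u_j - u\rangle.
\]
The first summand tends to zero by hypothesis; the second because $A(u) \in (W^{1,\H}_0(\Omega))'$ and $u_j - u \wto 0$. By pointwise monotonicity of $\xi \mapsto |\xi|^{p-2}\xi$ and $\xi \mapsto |\xi|^{q-2}\xi$ and nonnegativity of $a$, the integrand of $\langle A(u_j)-A(u),\, u_j-u\rangle$ is the sum of two pointwise nonnegative quantities, so each of
\[
I_p := \int_\Omega \big\langle |\nabla u_j|^{p-2}\nabla u_j - |\nabla u|^{p-2}\nabla u,\, \nabla u_j - \nabla u\big\rangle\, dx,
\]
\[
I_q := \int_\Omega a(x)\, \big\langle |\nabla u_j|^{q-2}\nabla u_j - |\nabla u|^{q-2}\nabla u,\, \nabla u_j - \nabla u\big\rangle\, dx
\]
tends to zero individually.

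Next I would invoke the classical Simon inequalities: for $s \ge 2$, $\langle |\xi|^{s-2}\xi - |\eta|^{s-2}\eta, \xi - \eta\rangle \ge c_s\,|\xi-\eta|^s$; and for $1 < s < 2$, $\langle |\xi|^{s-2}\xi - |\eta|^{s-2}\eta, \xi - \eta\rangle \ge c_s\,|\xi-\eta|^2/(|\xi|+|\eta|)^{2-s}$. In the superquadratic case ($s \ge 2$), the convergences $I_p \to 0$ and $I_q \to 0$ yield immediately $\norm[p]{\nabla u_j - \nabla u} \to 0$ and $\norm[q,a]{\nabla u_j - \nabla u} \to 0$. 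In the subquadratic case, I would combine the degenerate inequality with H\"{o}lder's inequality (exponents $2/s$ and $2/(2-s)$, splitting the weight $a$ appropriately in the $q$-part) and use that weak convergence supplies uniform bounds on $\norm[p]{\nabla u_j}$ and $\norm[q,a]{\nabla u_j}$ via the embedding $L^\H(\Omega) \hookrightarrow L^p(\Omega) \cap L^q_a(\Omega)$ and \eqref{5}.

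Finally, applying \eqref{5} to $u_j - u$ gives
\[
\min\set{\norm{u_j-u}^p,\, \norm{u_j-u}^q} \le \norm[p]{\nabla u_j - \nabla u}^p + \norm[q,a]{\nabla u_j - \nabla u}^q \longrightarrow 0,
\]
so for $j$ large the right-hand side is smaller than $1$, which forces $\norm{u_j - u} < 1$ and hence the minimum to equal $\norm{u_j-u}^q$; therefore $\norm{u_j - u} \to 0$. The main obstacle I anticipate is the bookkeeping in the subquadratic regime $1 < p < q < 2$, particularly in the weighted $q$-integral, where $a(x)$ has to be distributed between the two H\"{o}lder factors so that the denominator $(|\nabla u_j|+|\nabla u|)^{2-q}$ introduced by the Simon inequality is absorbed by the uniform boundedness of the gradients in $L^q_a(\Omega)$.
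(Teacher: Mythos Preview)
Your proposal is correct, but the paper proceeds along a different line. Rather than invoking the pointwise Simon inequalities, the paper observes via H\"older that
\[
\langle A(v), w\rangle \le \norm[p]{\nabla v}^{p-1}\norm[p]{\nabla w} + \norm[q,a]{\nabla v}^{q-1}\norm[q,a]{\nabla w},
\]
with equality when $v=w$. Expanding $\langle A(u_j)-A(u),\,u_j-u\rangle$ with this bound yields
\[
\big(\norm[p]{\nabla u_j}^{p-1}-\norm[p]{\nabla u}^{p-1}\big)\big(\norm[p]{\nabla u_j}-\norm[p]{\nabla u}\big)
+\big(\norm[q,a]{\nabla u_j}^{q-1}-\norm[q,a]{\nabla u}^{q-1}\big)\big(\norm[q,a]{\nabla u_j}-\norm[q,a]{\nabla u}\big)\to 0,
\]
so $\norm[p]{\nabla u_j}\to\norm[p]{\nabla u}$ and $\norm[q,a]{\nabla u_j}\to\norm[q,a]{\nabla u}$; uniform convexity (the Radon--Riesz property) of $L^p$ and $L^q_a$ then upgrades weak-plus-norm convergence to strong convergence, and \eqref{5} finishes as in your last step. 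The paper's argument is shorter and entirely avoids the case distinction $s\gtrless 2$, at the price of invoking an abstract functional-analytic principle and needing weak convergence of the gradients in $L^q_a$. Your approach is more hands-on and self-contained; the ``bookkeeping'' you flag in the subquadratic weighted case is indeed routine once one writes $a = a^{s/2}\cdot a^{(2-s)/2}$ before applying H\"older with exponents $2/s$ and $2/(2-s)$.
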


\begin{proof}
	Noting that
	\[
	\langle A(u), v\rangle \le \norm[p]{\nabla u}^{p-1} \norm[p]{\nabla v} + \norm[q,a]{\nabla u}^{q-1} \norm[q,a]{\nabla v} \quad \forall u, v \in W^{1,\H}_0(\Omega)
	\]
	by the H\"{o}lder inequality, and the equality holds when $u = v$, we have
	\begin{multline*}
	0 \le \big(\norm[p]{\nabla u_j}^{p-1} - \norm[p]{\nabla u}^{p-1}\big) \! \big(\norm[p]{\nabla u_j} - \norm[p]{\nabla u}\big)\\[5pt]
	+ \big(\norm[q,a]{\nabla u_j}^{q-1} - \norm[q,a]{\nabla u}^{q-1}\big) \! \big(\norm[q,a]{\nabla u_j} - \norm[q,a]{\nabla u}\big) \le \langle A(u_j) - A(u), u_j - u \rangle\to 0,
	\end{multline*}
	so that $\norm[p]{\nabla u_j} \to \norm[p]{\nabla u}$ and $\norm[q,a]{\nabla u_j} \to \norm[q,a]{\nabla u}$. Then $\nabla u_j \to \nabla u$ in $L^p(\Omega) \cap L^q_a(\Omega)$ by uniform convexity, and hence the conclusion follows from from \eqref{5}.
\end{proof}

Recall that the functional $\Phi$ satisfies the Palais-Smale compactness condition at the level $c \in \R$, or \PS{c} for short, if every sequence $\seq{u_j} \subset W^{1,\H}_0(\Omega)$ such that $\Phi(u_j) \to c$ and $\Phi'(u_j) \to 0$, called a \PS{c} sequence, has a convergent subsequence. We say that $\Phi$ satisfies the \PS{} condition if it satisfies the \PS{c} condition for all $c \in \R$. When verifying these conditions, it suffices to show that $\seq{u_j}$ is bounded by the following proposition.

\begin{proposition}[Bounded Palais-Smale condition] 
	\label{Proposition 4}
	Every bounded sequence $\seq{u_j} \subset W^{1,\H}_0(\Omega)$ such that $\Phi'(u_j) \to 0$ has a convergent subsequence.
\end{proposition}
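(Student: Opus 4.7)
The plan is to extract a weakly convergent subsequence and then reduce the claim to Proposition~\ref{Proposition 1} by showing that $\langle A(u_j), u_j - u\rangle \to 0$.

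First, since $(u_j)$ is bounded in the reflexive Banach space $W^{1,\H}_0(\Omega)$, I would pass to a subsequence and find $u \in W^{1,\H}_0(\Omega)$ with $u_j \wto u$ in $W^{1,\H}_0(\Omega)$. By the compact Sobolev embedding $W^{1,\H}_0(\Omega) \hookrightarrow L^r(\Omega)$ recalled in Section~\ref{prelim}, one also has $u_j \to u$ strongly in $L^r(\Omega)$, and in particular $(u_j)$ is bounded in $L^r(\Omega)$.

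Next, I would test $\Phi'(u_j)$ against $u_j - u$. Because $(u_j - u)$ is bounded in $W^{1,\H}_0(\Omega)$ and $\Phi'(u_j) \to 0$ in the dual, we get $\langle \Phi'(u_j), u_j - u\rangle \to 0$. Writing, via \eqref{3},
\[
\langle A(u_j), u_j - u\rangle = \langle \Phi'(u_j), u_j - u\rangle + \int_\Omega f(x,u_j)\,(u_j - u)\, dx,
\]
it remains to show that the last integral vanishes as $j \to \infty$. Using the growth condition \eqref{2} and the H\"older inequality with conjugate exponent $r' = r/(r-1)$,
\[
\left| \int_\Omega f(x,u_j)\,(u_j - u)\, dx \right| \le C \left( \norm[r]{u_j}^{r-1} + |\Omega|^{1/r'} \right) \norm[r]{u_j - u},
\]
which tends to zero since $(u_j)$ is bounded in $L^r(\Omega)$ and $u_j \to u$ in $L^r(\Omega)$ by compact embedding. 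Hence $\langle A(u_j), u_j - u\rangle \to 0$, and Proposition~\ref{Proposition 1} yields $u_j \to u$ in $W^{1,\H}_0(\Omega)$.

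The main obstacle here is really just organizing the reduction to the $(S_+)$-type property of $A$ given by Proposition~\ref{Proposition 1}: the whole point of that preparatory result is that the $pq$-mixture in $\Phi$ prevents a direct monotonicity/uniform convexity argument on $W^{1,\H}_0(\Omega)$, so one must exploit the decoupled estimate in $L^p(\Omega) \cap L^q_a(\Omega)$ together with the norm equivalence \eqref{5}. Once Proposition~\ref{Proposition 1} is in hand, the nonlinear term $f(x,u_j)$ is handled in a standard way thanks to the subcritical growth \eqref{2} and the compactness of the embedding into $L^r(\Omega)$.
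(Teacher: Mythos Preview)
Your argument is correct and follows exactly the same route as the paper: pass to a weakly convergent subsequence, use the compact embedding into $L^r(\Omega)$ together with the growth bound \eqref{2} and H\"older to kill the nonlinear term, and then invoke Proposition~\ref{Proposition 1}. The only cosmetic difference is that the paper absorbs the factor $|\Omega|^{1/r'}$ into the constant $C$.
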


\begin{proof}
	Since $\seq{u_j}$ is bounded, a renamed subsequence converges to some $u$ weakly in $W^{1,\H}_0(\Omega)$ and strongly in $L^r(\Omega)$. Then
	\[
	\langle A(u_j), u_j - u\rangle = \langle \Phi'(u_j), u_j - u\rangle + \int_\Omega f(x,u_j)\, (u_j - u)\, dx \to 0
	\]
	since
	\[
	\abs{\int_\Omega f(x,u_j)\, (u_j - u)\, dx} \le C \left(\norm[r]{u_j}^{r-1} + 1\right) \norm[r]{u_j - u}
	\]
	by \eqref{2} and the H\"{o}lder inequality, so the conclusion follows from Proposition \ref{Proposition 1}.
\end{proof}

\subsection{Regularity estimates}

For $f \in L^m(\Omega)$ with $m >1$, solutions of
\begin{equation} \label{10}
\int_\Omega \left(|\nabla u|^{p-2} + a(x)\, |\nabla u|^{q-2}\right) \nabla u \cdot \nabla v\, dx = \int_{\Omega} f(x)\, v\, dx \quad \forall v \in W^{1,\H}_0(\Omega)
\end{equation}
enjoy the natural $L^m$-estimates given in the following proposition.

\begin{proposition} \label{Proposition 5}
	Let $f \in L^m(\Omega),\, 1 < m \le \infty$ and let $u \in W^{1,\H}_0(\Omega)$ satisfy \eqref{10}. Then
	\begin{equation} \label{11}
	\norm[r]{u} \le C \norm[m]{f}^{1/(p-1)},
	\end{equation}
	where we have set
	\[
	r = \begin{cases}
	\dfrac{N\, (p - 1)\, m}{N - pm}, & 1< m < \dfrac{N}{p}\\[10pt]
	\infty, & m > \dfrac{N}{p}
	\end{cases}
	\]
	and $C = C(N,\Omega,p,m) > 0$.
\end{proposition}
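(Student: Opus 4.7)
The plan is to reduce Proposition~\ref{Proposition 5} to the classical $L^m$-regularity theory for the $p$-Laplacian, by exploiting the sign condition $a(x)\ge 0$ to treat the $q$-term as a pure gain in coercivity. If $u\in W^{1,\H}_0(\Omega)$ solves \eqref{10} and $v=\phi(u)$ for some non-decreasing, locally Lipschitz function $\phi:\R\to\R$ with $\phi(0)=0$, then the chain rule gives $\nabla v=\phi'(u)\,\nabla u$ almost everywhere, so both terms on the left-hand side of \eqref{10} are pointwise non-negative. Dropping the one involving $a$ yields
\begin{equation*}
\int_\Omega |\nabla u|^{p-2}\,\nabla u\cdot\nabla v\, dx\le \int_\Omega f\,v\, dx,
\end{equation*}
so $u$ behaves as a sub-solution of the pure $p$-Laplacian $-\Delta_p w\le f$ against any such $v$. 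The exponent $r$ and the dependence $\norm[m]{f}^{1/(p-1)}$ in \eqref{11} are precisely those of the classical a~priori estimates for this sub-solution problem.

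Once this reduction is in place, the two regimes are standard. For $m>N/p$ I would run a Moser iteration, testing with $v_{k,\beta}=\mathrm{sgn}(u)\min\set{|u|,k}^{1+\beta}$ for $\beta\ge 0$, rewriting the left-hand side in terms of $\nabla(\min\set{|u|,k}^{(p+\beta)/p})$, and closing the recursion by the Sobolev embedding $W^{1,p}_0(\Omega)\hookrightarrow L^{p^\ast}(\Omega)$; iteration in $\beta$ and passage to the limit $k\to\infty$ deliver the $L^\infty$-bound $\norm[\infty]{u}\le C\norm[m]{f}^{1/(p-1)}$. For $1<m<N/p$ I would use a Boccardo--Gallou\"et-type truncation $v=\mathrm{sgn}(u)\bigl((1+|u|)^\alpha-1\bigr)$, with $\alpha>0$ chosen so that the Sobolev inequality applied on the left and H\"older's inequality on the right balance; this directly yields $u\in L^r(\Omega)$ with $r=N(p-1)m/(N-pm)$.

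The one non-routine step is checking that these test functions live in $W^{1,\H}_0(\Omega)$. Each such $\phi$ is, after truncation at level $k$, bounded and globally Lipschitz with $\phi(0)=0$; hence $|\nabla \phi(u)|\le L_k|\nabla u|$ belongs to $L^\H(\Omega)$ and $\phi(u)$ vanishes on $\partial\Omega$ in the trace sense, so $\phi(u)\in W^{1,\H}_0(\Omega)$ is admissible in \eqref{10}. The a~priori bounds produced by both schemes are uniform in $k$, so the final step $k\to\infty$ is a routine monotone/dominated convergence argument. The main obstacle is thus purely book-keeping—making sure the truncations are carried correctly through the Musielak--Orlicz setting—since no new idea beyond the classical $p$-Laplace $L^m$-theory is required: the $q$-term is absorbed into pure coercivity.
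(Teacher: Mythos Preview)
Your proposal is correct and follows essentially the same approach as the paper: drop the nonnegative $a(x)|\nabla u|^{q-2}\nabla u\cdot\nabla\phi(u)$ term when testing with nondecreasing $\phi$, then apply the standard Sobolev/H\"older balancing with power-type test functions. The paper uses the slightly simpler truncated odd power $g(u)=u_k^{\alpha}$ (with $\alpha$ chosen so that $(\alpha+p-1)p^\ast/p=\alpha m'$) in place of your Boccardo--Gallou\"et truncation for the case $1<m<N/p$, and for $m>N/p$ it defers to a Moser-type argument exactly as you outline; these are cosmetic variations of the same scheme.
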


\begin{proof}
	For $k, \alpha > 0$ and $t \in \R$, set $t_k = \max \set{-k,\min \set{t,k}}$ and consider 
	the nondecreasing function $g(t) = t_k^\alpha$ (with the agreement $a^\alpha:=|a|^{\alpha-1}a,$ for $a\in\R$). 
	Testing equation \eqref{10} with the $g(u) \in W^{1,\H}_0(\Omega)$ provides the inequality
	\[
	\norm[p]{\nabla G(u)}^p \le \int_\Omega f(x)\, g(u)\, dx,
	\]
	where
	\[
	G(t) := \int_0^t g'(s)^{1/p}\, ds = \frac{\alpha^{1/p}\, p}{\alpha + p - 1}\, t_k^{(\alpha + p - 1)/p},\quad t\in\R.
	\]
	Using the Sobolev inequality on the left and the H\"{o}lder inequality on the right now gives
	\begin{equation} \label{12}
	\big\|u_k^{(\alpha + p - 1)/p}\big\|_{p^\ast}^p \le C \norm[m]{f} \norm[m']{u_k^\alpha}.
	\end{equation}
	If $1< m < N/p$, take
	\[
	\alpha = \frac{(p - 1)\, p^\ast}{pm' - p^\ast} = \frac{N\, (p - 1)\, (m - 1)}{N - pm} > 0,
	\]
	so that
	\[
	\frac{(\alpha + p - 1)\, p^\ast}{p} = \alpha m' =: r.
	\]
	Then $r = N\, (p - 1)\, m/(N - pm)$ and \eqref{12} gives $\norm[r]{u_k}^{pr/p^\ast} \le C \norm[m]{f} \norm[r]{u_k}^{r/m'}$, so
	\[
	\norm[r]{u_k} \le C \norm[m]{f}^{1/(p-1)}.
	\]
	Letting $k \to \infty$ gives \eqref{11} for this case. If $N/p < m \le \infty$, arguing as in \cite[Section 3.2]{CoSq} gives
	\begin{equation} \label{13}
	\norm[\infty]{u} \le C\,  \norm[m]{f}^{1/(p-1)}.
	\end{equation}
This concludes the proof.
\end{proof}


\subsection{Critical groups at zero}

In this subsection we consider the problem
\begin{equation} \label{6}
\left\{\begin{aligned}
- \divg \left(|\nabla u|^{p-2}\, \nabla u + a(x)\, |\nabla u|^{q-2}\, \nabla u\right) & = \lambda\, |u|^{p-2}\, u + g(x,u) && \text{in } \Omega\\
u & = 0 && \text{on } \bdry{\Omega},
\end{aligned}\right.
\end{equation}
where $\lambda \in \R$ is a parameter and $g$ is a Carath\'{e}odory function on $\Omega \times \R$ satisfying
\begin{equation} \label{7}
|g(x,t)| \le C \left(|t|^{r-1} + |t|^{\sigma - 1}\right) \quad \text{for a.a. } x \in \Omega \text{ and all } t \in \R
\end{equation}
for some $p < \sigma < r < p^\ast$ and $C > 0$. Problem \eqref{6} has the trivial solution $u = 0$, and we study the critical groups at $0$ of the associated functional
\[
\Phi(u) = \int_\Omega \left[\frac{1}{p}\, |\nabla u|^p + \frac{a(x)}{q}\, |\nabla u|^q - \frac{\lambda}{p}\, |u|^p - G(x,u)\right] dx, \quad u \in W^{1,\H}_0(\Omega),
\]
where $G(x,t) = \int_0^t g(x,s)\, ds$.
Let us recall that the critical groups of $\Phi$ at $0$ are given by
\begin{equation} \label{24}
C^q(\Phi,0) := H^q(\Phi^0 \cap U,\Phi^0 \cap U \setminus \set{0}), \quad q \in\N,
\end{equation}
where $\Phi^0 = \bgset{u \in W^{1,\H}_0(\Omega) : \Phi(u) \le 0}$, $U$ is any neighborhood of $0$, and $H$ denotes Alexander-Spanier cohomology with $\Z_2$-coefficients. They are independent of $U$ by the excision property of the cohomology groups. They are also invariant under homotopies that preserve the isolatedness of the critical point by the following proposition (see Chang and Ghoussoub \cite{MR1422006} or Corvellec and Hantoute \cite{MR1926378}).

\begin{proposition}[Homotopical invariance]
	\label{Proposition 2}
	Let $\Phi_\tau,\, \tau \in [0,1]$ be a family of $C^1$-functionals on a Banach space $W$ such that $0$ is a critical point of each $\Phi_\tau$. If there is a closed neighborhood $U$ of $0$ such that
	\begin{enumarab}
		\item each $\Phi_\tau$ satisfies the {\em \PS{}} condition over $U$,
		\item $U$ contains no other critical point of any $\Phi_\tau$,
		\item the map $[0,1] \to C^1(U,\R),\, \tau \mapsto \Phi_\tau$ is continuous,
	\end{enumarab}
	then $C^q(\Phi_0,0) \isom C^q(\Phi_1,0)$ for all $q$.
\end{proposition}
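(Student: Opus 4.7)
The plan is to reduce the statement to a local constancy result in $\tau$ and then establish that via a deformation argument built on a continuous family of pseudo-gradient flows. First, since hypothesis (3) makes $\tau \mapsto \Phi_\tau(0)$ continuous, I would replace each $\Phi_\tau$ by $\Phi_\tau - \Phi_\tau(0)$: this preserves (1)--(3) and normalizes $\Phi_\tau(0) = 0$ for every $\tau$, so that the sublevel sets $\Phi_\tau^0$ are precisely the objects appearing in the definition \eqref{24} of $C^q(\Phi_\tau,0)$. By compactness of $[0,1]$ it then suffices to prove local constancy: for each $\tau_0 \in [0,1]$ I would produce $\delta > 0$ such that $C^q(\Phi_\tau,0) \isom C^q(\Phi_{\tau_0},0)$ whenever $|\tau - \tau_0| < \delta$, and chain the resulting isomorphisms along a finite subcover of $[0,1]$.

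The analytic heart of the argument is a uniform non-vanishing estimate for $\Phi_\tau'$ away from zero. Shrinking $U$ to a closed ball $V \incl U$ centered at $0$ if necessary, I would establish that there exist $\rho, \delta, \kappa > 0$ such that
\[
\norm{\Phi_\tau'(u)} \ge \kappa \quad \text{for every } u \in V \setminus B_\rho(0) \text{ and every } \tau \text{ with } |\tau - \tau_0| \le \delta.
\]
If this failed, I could extract sequences $\tau_n \to \tau_0$ and $u_n \in V$ with $\norm{u_n} \ge \rho$ and $\norm{\Phi_{\tau_n}'(u_n)} \to 0$; the $C^1$-continuity (3) then gives $\norm{\Phi_{\tau_0}'(u_n)} \to 0$, so $\seq{u_n}$ is a Palais--Smale sequence for $\Phi_{\tau_0}$ over $V$. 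By (1) a subsequence converges to some $u^\ast \in V$ with $\norm{u^\ast} \ge \rho$ and $\Phi_{\tau_0}'(u^\ast) = 0$, contradicting the assumption (2) that $0$ is the only critical point of $\Phi_{\tau_0}$ in $U$.

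With this uniform estimate in hand, I would build a locally Lipschitz vector field $X$ on $V \setminus \set{0}$ that is a pseudo-gradient for \emph{every} $\Phi_\tau$ with $|\tau - \tau_0| \le \delta$ simultaneously, via the classical partition-of-unity construction, using (3) to arrange local choices that serve an entire interval of $\tau$'s. The flow generated by $-X$ then deforms both $\Phi_\tau^0 \cap V$ and $(\Phi_\tau^0 \cap V) \setminus \set{0}$ without crossing $0$, and a continuous reparametrization in $\tau$ produces an explicit homotopy equivalence of pairs
\[
\left(\Phi_\tau^0 \cap V,\; (\Phi_\tau^0 \cap V) \setminus \set{0}\right) \;\simeq\; \left(\Phi_{\tau_0}^0 \cap V,\; (\Phi_{\tau_0}^0 \cap V) \setminus \set{0}\right).
\]
Applying Alexander--Spanier cohomology, together with excision to justify passage between $U$ and $V$, delivers the local constancy and concludes the proof.

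The hard part will be the simultaneous construction of the pseudo-gradient field for the whole family $\seq{\Phi_\tau}$ along with the uniform lower bound on $\norm{\Phi_\tau'}$ away from $0$. Both rely on a genuine interplay between (1), (2), and (3), and neither is available globally on $[0,1]$; this is exactly why I would organize the argument locally in $\tau$ and invoke compactness only at the final step to splice the local isomorphisms into a single one.
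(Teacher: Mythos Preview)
The paper does not supply its own proof of this proposition: it is stated with a reference to Chang--Ghoussoub \cite{MR1422006} and Corvellec--Hantoute \cite{MR1926378}, and then used as a black box in the proof of Lemma~\ref{Lemma 1}. So there is no in-paper argument to compare your proposal against.

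That said, your outline is the standard strategy behind those cited results, and the first two steps are sound. The normalization $\Phi_\tau \mapsto \Phi_\tau - \Phi_\tau(0)$ is harmless, and the reduction to local constancy in $\tau$ via compactness of $[0,1]$ is exactly how these proofs are organized. Your contradiction argument for the uniform lower bound on $\norm{\Phi_\tau'}$ over an annulus is correct; note only that to invoke the \PS{} condition you need $\Phi_{\tau_0}(u_n)$ bounded, which follows because hypothesis~(3), read with the $C^1(U,\R)$ sup-norm, forces each $\Phi_\tau$ to be bounded on $U$.

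Where your sketch is thinnest is the passage from the common pseudo-gradient flow to the homotopy equivalence of the pairs $\big(\Phi_\tau^0 \cap V,\, \Phi_\tau^0 \cap V \setminus \set{0}\big)$. Two genuine issues lurk there: (i) the flow of $-X$ need not stay inside $V$, so one has to either cut off $X$ near $\bdry V$ or work instead with a Gromoll--Meyer pair adapted to the family; and (ii) one must explain precisely how the flow matches the two pairs --- the usual device is to use $\sup_{V} |\Phi_\tau - \Phi_{\tau_0}| < \eps$ to sandwich $\Phi_{\tau_0}^0 \cap V$ between $\Phi_\tau^{-\eps} \cap V$ and $\Phi_\tau^{\eps} \cap V$, and then use the flow (together with the uniform gradient bound and absence of critical values in $[-\eps,\eps] \setminus \set{0}$) to deform the outer set onto the inner one rel the critical point. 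Your phrase ``a continuous reparametrization in $\tau$'' does not by itself deliver this; it is exactly here that the cited papers do the real work, and you should expect most of the effort to go into this step if you write it out in full.
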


\noindent
First we show that the critical groups of $\Phi$ at $0$ depend only on the values of $g(x,t)$ for small $|t|$.

\begin{lemma} \label{Lemma 1}
	Let $\delta > 0$ and let $\vartheta : \R \to [- \delta,\delta]$ be a smooth nondecreasing function such that 
	$$
	\vartheta(t) = - \delta\quad\text{for $t \le - \delta$},\,\quad 
	\vartheta(t) = t\quad\text{for $- \delta/2 \le t \le \delta/2$},\,\quad 
	\vartheta(t) = \delta\quad\text{for $t \ge \delta$}. 
	$$
	Let us set
	\[
	\Phi_1(u) = \int_\Omega \left[\frac{1}{p}\, |\nabla u|^p + \frac{a(x)}{q}\, |\nabla u|^q - \frac{\lambda}{p}\, |u|^p - G(x,\vartheta(u))\right] dx, \quad u \in W^{1,\H}_0(\Omega).
	\]
	If $0$ is an isolated critical point of $\Phi$, then it is also an isolated critical point of $\Phi_1$ and 
	$$
	C^q(\Phi,0) \isom C^q(\Phi_1,0),\quad\text{for all $q$.}
	$$
\end{lemma}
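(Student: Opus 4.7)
My approach is to apply the homotopy invariance from Proposition~\ref{Proposition 2} to the convex interpolation
\[
\Phi_\tau(u) := \int_\Omega \left[\frac{1}{p}|\nabla u|^p + \frac{a(x)}{q}|\nabla u|^q - \frac{\lambda}{p}|u|^p - (1-\tau)G(x,u) - \tau G(x,\vartheta(u))\right]dx,\qquad \tau\in[0,1],
\]
which connects $\Phi_0 = \Phi$ to $\Phi_1$. The growth condition \eqref{7} forces $g(x,0)=0$, and $\vartheta(0)=0$, so the origin is a critical point of every $\Phi_\tau$. I would then verify the hypotheses of Proposition~\ref{Proposition 2} on a small closed ball $U = \bgset{u \in W^{1,\H}_0(\Omega) : \norm{u} \le \eps}$, with $\eps$ to be chosen.

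Condition~(1), the Palais--Smale property on $U$, is immediate from Proposition~\ref{Proposition 4}, since every PS sequence contained in the bounded set $U$ is bounded and hence convergent along a subsequence. Condition~(3), the $C^1$-continuity of $\tau \mapsto \Phi_\tau$, reduces to the continuity of the map $\tau \mapsto \tau \int_\Omega [G(x,\vartheta(u)) - G(x,u)]\,dx$ as an element of $C^1(U,\R)$; this follows from \eqref{7}, the bound $|\vartheta(t)|\le |t|$, and the compactness of $W^{1,\H}_0(\Omega)\hookrightarrow L^r(\Omega)$, by standard dominated convergence arguments.

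The main obstacle is condition~(2): choosing $\eps>0$ so that $0$ is the only critical point of $\Phi_\tau$ in $U$ \emph{uniformly in $\tau\in[0,1]$}. Suppose, for contradiction, there exist $\tau_n \in [0,1]$ and nonzero critical points $u_n$ of $\Phi_{\tau_n}$ with $\norm{u_n}\to 0$. Each $u_n$ weakly solves
\[
- \divg \left(|\nabla u_n|^{p-2}\nabla u_n + a(x)|\nabla u_n|^{q-2}\nabla u_n\right) = f_n(x),
\]
with $f_n(x) = \lambda\, |u_n|^{p-2}u_n + (1-\tau_n)\, g(x,u_n) + \tau_n\, g(x,\vartheta(u_n))\, \vartheta'(u_n)$. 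Using $|\vartheta(t)|\le |t|$, boundedness of $\vartheta'$, and \eqref{7}, one obtains the uniform pointwise bound $|f_n(x)| \le C(|u_n|^{p-1} + |u_n|^{r-1})$. Starting from $u_n \in L^{p^\ast}(\Omega)$ via the continuous embedding $W^{1,\H}_0 \hookrightarrow W^{1,p}_0 \hookrightarrow L^{p^\ast}$, I would bootstrap \eqref{11}: since $(r-1)/(p-1)>1$, each step strictly increases the integrability exponent \emph{and} propagates smallness of the Lebesgue norms, so after finitely many iterations the exponent $m$ controlling $f_n$ exceeds $N/p$, and \eqref{13} then yields $\norm[\infty]{u_n}\to 0$. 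For $n$ large enough one therefore has $|u_n(x)|\le \delta/2$ a.e., so $\vartheta(u_n)=u_n$ and $\vartheta'(u_n)=1$, and $u_n$ is in fact a critical point of the original functional $\Phi$. The isolatedness of $0$ for $\Phi$ forces $u_n=0$ for all large $n$, a contradiction. This simultaneously establishes that $0$ is isolated for $\Phi_1$ and, via Proposition~\ref{Proposition 2}, that $C^q(\Phi,0)\isom C^q(\Phi_1,0)$ for all $q$.
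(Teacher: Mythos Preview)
Your proof is correct and follows essentially the same strategy as the paper: apply Proposition~\ref{Proposition 2} on a small ball, reduce to showing that any sequence of nontrivial critical points $u_n\to 0$ of the interpolated functionals eventually satisfies $\|u_n\|_\infty\to 0$, and conclude by observing that then $u_n$ is a critical point of $\Phi$ itself. There are two cosmetic differences: the paper interpolates the \emph{argument} of $G$ via $G(x,(1-\tau)u+\tau\vartheta(u))$ rather than the values of $G$, and it reaches $\|u_n\|_\infty\to 0$ in one step by first invoking a uniform $L^\infty$ bound from \cite{CoSq} and interpolating, whereas you bootstrap Proposition~\ref{Proposition 5}; both routes work (the bootstrap succeeds precisely because $r<p^\ast$, which is the sharp condition rather than $(r-1)/(p-1)>1$).
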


\begin{proof}
	We apply Proposition \ref{Proposition 2} to the family of functionals, for $u \in W^{1,\H}_0(\Omega)$ and $\tau \in [0,1]$,
	\begin{equation*}
	\Phi_\tau(u) := \int_\Omega \left[\frac{1}{p}\, |\nabla u|^p + \frac{a(x)}{q}\, |\nabla u|^q - \frac{\lambda}{p}\, |u|^p - G(x,(1 - \tau)\, u + \tau\, \vartheta(u))\right] dx,
	\end{equation*}
	in a ball $B_\varepsilon(0) = \{u \in W^{1,\H}_0(\Omega) : \norm{u} \le \varepsilon\}$ for $\varepsilon>0$ small, after noting that $\Phi_0 = \Phi$. Proposition \ref{Proposition 4} implies that each $\Phi_\tau$ satisfies the Palais-Smale condition over the ball $B_\varepsilon(0)$ and it is readily seen that the map 
	$[0,1]\ni\tau\mapsto  \Phi_\tau\in  C^1(B_\varepsilon(0),\R)$ is continuous, so it only remains to show that for sufficiently small $\varepsilon > 0$, $B_\varepsilon(0)$ contains no critical point of any $\Phi_\tau$ other than $0$.
	Suppose $u_j \to 0$ in $W^{1,\H}_0(\Omega)$, $\Phi_{\tau_j}'(u_j) = 0,\, \tau_j \in [0,1]$ and $u_j \ne 0$. Then $u_j$ is a weak solution to 
	\[
	\left\{\begin{aligned}
	- \divg \left(|\nabla u_j|^{p-2}\, \nabla u_j + a(x)\, |\nabla u_j|^{q-2}\, \nabla u_j\right) & = \lambda\, |u_j|^{p-2}\, u_j + g_j(x,u_j) && \text{in } \Omega\\
	u_j & = 0 && \text{on } \bdry{\Omega},
	\end{aligned}\right.
	\]
	where we have set
	\[
	g_j(x,t) = (1 - \tau_j + \tau_j\, \vartheta'(t))\, g(x,(1 - \tau_j)\, t + \tau_j\, \vartheta(t)).
	\]
	Since $(1 - \tau_j)\, t + \tau_j\, \vartheta(t) = t$ for $|t| \le \delta/2$ and $|(1 - \tau_j)\, t + \tau_j\, \vartheta(t)| \le |t| + \delta < 3\, |t|$ for $|t| > \delta/2$, the growth estimate \eqref{7} implies that, for some $C>0$ independent of $j$,
	\[
	|g_j(x,t)| \le C \left(|t|^{r-1} + |t|^{\sigma - 1}\right) \quad \text{for a.a. } x \in \Omega \text{ and all } t \in \R.
	\]
	Then  $u_j \in L^\infty(\Omega)$ (cf.\ \cite[Section 3.2]{CoSq}) with $L^\infty$-bound independent of $j$. 
	Since $u_j \to 0$ in $W^{1,p}_0(\Omega)$, it follows $\|u_j\|_\ell\to 0$ for any $\ell\geq 1$, as $j\to\infty$.
	By Proposition \ref{Proposition 5}, applied with the choice
	$$
	f_j(x):=\lambda |u_j(x)|^{p-2}u_j(x)+g_j(x,u_j(x)),\quad j\in\N,\,\, x\in\Omega,
	$$
	we get $\|u_j\|_{\infty} \to 0$ since for a fixed $m_0>N/p$ we have, for $j\to\infty$,
	$$
	\int_\Omega |f_j|^{m_0} dx\leq C\|u_j\|_{m_0(p-1)}^{m_0(p-1)}+C\|u_j\|_{m_0(r-1)}^{m_0(r-1)}+C\|u_j\|_{m_0(\sigma-1)}^{m_0(\sigma-1)}\to 0.
	$$
	For sufficiently large $j$ we thus have  $|u_j(x)| \le \delta/2$ for a.e.\ $x\in\Omega$
	and, hence, $\Phi'(u_j) = \Phi_{\tau_j}'(u_j) = 0$, contradicting the	
	assumption that $0$ is an {\em isolated} critical point of $\Phi$.
\end{proof}

\noindent
In the absence of a direct sum decomposition, the main technical tool to get an estimate of the critical groups is the notion of cohomological local splitting introduced in Perera, Agarwal and O'Regan \cite{MR2640827}, which is a variant of the homological linking of Perera \cite{MR1700283} (see 
\cite{MR1312028}). The following slightly different form of this notion was given in Degiovanni, Lancelotti, and Perera \cite{MR2661274}.

\begin{definition}
	We say that a $C^1$-functional $\Phi$ on a Banach space $W$ has a cohomological local splitting near $0$ in dimension $k \ge 1$ if there are symmetric cones $W_\pm \subset W$ with $W_+ \cap W_- = \set{0}$ and $\rho > 0$ such that
	\[
	i(W \setminus W_+) = i(W_- \setminus \set{0}) = k
	\]
	and
	\begin{equation} \label{16}
	\Phi(u) \ge \Phi(0) \quad \forall u \in B_\rho \cap W_+, \qquad \Phi(u) \le \Phi(0) \quad \forall u \in B_\rho \cap W_-,
	\end{equation}
	where $i$ denotes the $\Z_2$-cohomological index and $B_\rho = \set{u \in W : \norm{u} \le \rho}$.
\end{definition}

We recall the definition of the cohomological index (see Fadell and Rabinowitz \cite{MR57:17677}). For a symmetric subset $M$ of $W \setminus \set{0}$, let $\overline{M} = M/\Z_2$ be the quotient space of $M$ with each $u$ and $-u$ identified, let $f : \overline{M} \to \RP^\infty$ be the classifying map of $\overline{M}$, and let $f^\ast : H^\ast(\RP^\infty) \to H^\ast(\overline{M})$ be the induced homomorphism of the Alexander-Spanier cohomology rings. Then the cohomological index of $M$ is defined by
\[
i(M) = \begin{cases}
\sup\, \set{m \ge 1 : f^\ast(\omega^{m-1}) \ne 0}, & M \ne \emptyset\\[5pt]
0, & M = \emptyset,
\end{cases}
\]
where $\omega \in H^1(\RP^\infty)$ is the generator of the polynomial ring $H^\ast(\RP^\infty) = \Z_2[\omega]$. For example, the classifying map of the unit sphere $S^{m-1}$ in $\R^m,\, m \ge 1$ is the inclusion $\RP^{m-1} \incl \RP^\infty$, which induces isomorphisms on $H^q$ for $q \le m-1$, so $i(S^{m-1}) = m$.

\begin{proposition}[{Degiovanni, Lancelotti, and Perera \cite[Proposition 2.1]{MR2661274}}] \label{Proposition 3}
	Assume that $0$ is an isolated critical point of $\Phi$ and that $\Phi$ has a cohomological local splitting near $0$ in dimension $k$.
	Then it holds $C^k(\Phi,0) \ne 0.$
\end{proposition}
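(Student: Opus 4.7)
The plan is to prove $C^k(\Phi,0)\neq 0$ by combining a lower bound on the cohomological index of the punctured negative sublevel set with a $\Z_2$-equivariant pseudo-gradient deformation that uses the positive cone $W_+$ as an obstruction. Without loss of generality assume $\Phi(0)=0$, and fix $U=B_\rho$ with $\rho>0$ so small that \eqref{16} holds and $0$ is the only critical point of $\Phi$ in $U$; this is possible because $0$ is isolated, and on such a neighborhood (PS) holds automatically, so standard deformation tools are available.

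First I would exploit the second inequality in \eqref{16}, which gives the inclusion $(W_-\cap B_\rho)\setminus\{0\}\subset \Phi^0\cap U\setminus\{0\}$. Since $W_-$ is a symmetric cone, the radial retraction $u\mapsto \rho u/\norm{u}$ is an odd homotopy equivalence from $(W_-\cap B_\rho)\setminus\{0\}$ onto $W_-\cap\partial B_\rho$. Homotopy invariance and monotonicity of the cohomological index then yield
\[
i(\Phi^0\cap U\setminus\{0\})\ge i\big((W_-\cap B_\rho)\setminus\{0\}\big)=i(W_-\setminus\{0\})=k.
\]

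Second, I would argue by contradiction, assuming $C^k(\Phi,0)=0$. The idea is to construct an odd continuous map $\psi:\Phi^0\cap U\setminus\{0\}\to W\setminus W_+$ from a $\Z_2$-equivariant pseudo-gradient flow for $\Phi$ near $0$: the first inequality in \eqref{16} forces $\Phi\ge 0$ on $B_\rho\cap W_+$, so descending trajectories starting on $\Phi^0\cap U\setminus\{0\}$ can be steered, via a suitable odd cutoff, to leave $W_+$ in finite time. The vanishing of $C^k$, together with the long exact sequence of the pair $(\Phi^0\cap U,\Phi^0\cap U\setminus\{0\})$ and a gradient-type deformation of $\Phi^0\cap U$, should allow one to refine $\psi$ so that its image lands in a set whose cohomological index is strictly less than $k$. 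Monotonicity of $i$ would then give
\[
k=i(\Phi^0\cap U\setminus\{0\})\le i\big(\psi(\Phi^0\cap U\setminus\{0\})\big)<k,
\]
a contradiction.

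The main technical obstacle is the construction of $\psi$ in the absence of a direct sum decomposition $W=W_-\oplus W_+$: one cannot simply project onto $W_-$. Instead, an equivariant pseudo-gradient flow of $\Phi$ must be patched with an odd cutoff steering trajectories away from $W_+$, and the resulting map has to remain continuous, odd, and land in a set of controlled cohomological index. Securing these three properties simultaneously, while exploiting both inequalities in \eqref{16} and the isolatedness of $0$, is exactly where the notion of cohomological local splitting genuinely replaces the classical linear Morse-theoretic splitting.
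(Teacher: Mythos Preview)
The paper does not actually prove this proposition: it is quoted verbatim from Degiovanni, Lancelotti, and Perera \cite[Proposition 2.1]{MR2661274} and used as a black box. So there is no ``paper's own proof'' to compare against; any assessment must be of your argument on its own merits.

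Your first step is fine: the inclusion $(W_-\cap B_\rho)\setminus\{0\}\subset \Phi^0\cap U\setminus\{0\}$ together with monotonicity and homotopy invariance of the index does give $i(\Phi^0\cap U\setminus\{0\})\ge k$.

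The gap is in the contradiction step. You assert that an odd pseudo-gradient flow will push $\Phi^0\cap U\setminus\{0\}$ into $W\setminus W_+$, but this is not justified: a point $u\in W_+\cap\Phi^0\cap U\setminus\{0\}$ has $\Phi(u)=0$, and while the descending flow strictly decreases $\Phi$, there is no reason the trajectory must exit $W_+$ before exiting $U=B_\rho$. Even granting such a map $\psi$, you would only get $i(\Phi^0\cap U\setminus\{0\})\le i(W\setminus W_+)=k$, which combined with the lower bound yields equality, not a contradiction. The decisive claim---that the hypothesis $C^k(\Phi,0)=0$ lets you ``refine $\psi$ so that its image lands in a set whose cohomological index is strictly less than $k$''---is precisely the content of the proposition, and you give no mechanism for it. What is missing is an explicit link between the vanishing of $H^k(\Phi^0\cap U,\Phi^0\cap U\setminus\{0\})$ and the equivariant cohomology class $f^\ast(\omega^{k-1})$ that detects the index; in \cite{MR2661274} this is done via the long exact sequence of the pair and the piercing/continuity properties of the Fadell--Rabinowitz index, not by an ad hoc flow construction. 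As written, your proposal identifies the right objects but stops exactly where the real argument begins.
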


\noindent
In order to give sufficient conditions for $\Phi$ to have a cohomological local splitting near $0$, and hence a nontrivial critical group by Proposition \ref{Proposition 3}, consider the asymptotic eigenvalue problem
\begin{equation} \label{8}
\left\{\begin{aligned}
- \Delta_p\, u & = \lambda\, |u|^{p-2}\, u && \text{in } \Omega\\[1pt]
u & = 0 && \text{on } \bdry{\Omega}.
\end{aligned}\right.
\end{equation}
Let
\[
I(u) = \int_\Omega |\nabla u|^p\, dx, \quad J(u) = \int_\Omega |u|^p\, dx, \quad u \in W^{1,p}_0(\Omega),
\]
and set
\[
\Psi(u) = \frac{1}{J(u)}, \quad u \in \M = \big\{u \in W^{1,p}_0(\Omega) : I(u) = 1\big\}.
\]
Then eigenvalues of problem \eqref{8} coincide with critical values of $\Psi$. Let $\F$ denote the class of symmetric subsets of $\M$, and set
\begin{equation} \label{23}
\lambda_k := \inf_{\substack{M \in \F\\[1pt]
		i(M) \ge k}}\, \sup_{u \in M}\, \Psi(u), \quad k \ge 1.
\end{equation}
Then $0 < \lambda_1 < \lambda_2 \le \lambda_3 \le \cdots \nearrow + \infty$ is a sequence of eigenvalues of \eqref{8} and
\begin{equation} \label{9}
\lambda_k < \lambda_{k+1} \implies i(\set{u \in \M : \Psi(u) \le \lambda_k}) = i(\set{u \in \M : \Psi(u) < \lambda_{k+1}}) = k
\end{equation}
(see \cite[Propositions 3.52 and 3.53]{MR2640827}). The main result of this subsection is the following theorem.

\begin{theorem}[Critical groups at $0$] 
	\label{Theorem 1}
	Assume that $g$ satisfies \eqref{7} and $0$ is an isolated critical point of $\Phi$.
	\begin{enumarab}
		\item $C^0(\Phi,0) \isom \Z_2$ and $C^q(\Phi,0) = 0$ for $q \ge 1$ in the following cases:
		\begin{enumroman}
			\item $\lambda < \lambda_1$;
			\item $\lambda = \lambda_1$ and, for some $\delta > 0$, $G(x,t) \le 0$ for a.a. $x \in \Omega$ and $|t| \le \delta$.
		\end{enumroman}
		\item $C^k(\Phi,0) \ne 0$ in the following cases:
		\begin{enumroman}
			\item $\lambda_k < \lambda < \lambda_{k+1}$;
			\item $\lambda_k < \lambda = \lambda_{k+1}$ and, for some $\delta > 0$, $G(x,t) \le 0$ for a.a. $x \in \Omega$ and $|t| \le \delta$;
			\item $\lambda_k = \lambda < \lambda_{k+1}$ and $G(x,t) \ge c\, |t|^s$ for a.a. $x \in \Omega$ and all $t \in \R$ for some $s \in (p,q)$ and $c > 0$.
		\end{enumroman}
	\end{enumarab}
\end{theorem}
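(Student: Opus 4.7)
The plan is first to reduce to the truncated functional $\Phi_1$ of Lemma~\ref{Lemma 1}, whose nonlinearity $G(x,\vartheta(u))$ is uniformly bounded and agrees with $G(x,u)$ when $|u|\le\delta/2$, so that the critical groups at $0$ coincide with those of $\Phi$. Part~(1) will then follow by showing that $0$ is an isolated local minimizer of $\Phi_1$, giving $C^0(\Phi_1,0)\isom\Z_2$ and $C^q(\Phi_1,0)=0$ for $q\ge 1$, while part~(2) will follow from Proposition~\ref{Proposition 3} by exhibiting a cohomological local splitting of $\Phi_1$ near $0$ in dimension $k$.

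For part~(1), drop the nonnegative term $\frac{a(x)}{q}|\nabla u|^q$ from a lower bound to get
\[
\Phi_1(u) \ge \frac{1}{p}\int_\Omega |\nabla u|^p\,dx - \frac{\lambda}{p}\int_\Omega |u|^p\,dx - \int_\Omega G(x,\vartheta(u))\,dx.
\]
In case~(i), the Rayleigh inequality $\int|u|^p\le\lambda_1^{-1}\int|\nabla u|^p$, the subcritical bound $|G(x,\vartheta(u))|\le C(|u|^r+|u|^\sigma)$, and the embeddings $W^{1,\H}_0\hookrightarrow W^{1,p}_0\hookrightarrow L^\sigma\cap L^r$ give $\Phi_1(u)\ge c_0\|\nabla u\|_p^p-C(\|\nabla u\|_p^\sigma+\|\nabla u\|_p^r)\ge 0$ for $\|u\|$ small, since $\sigma,r>p$. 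In case~(ii), the Rayleigh bound (now non-strict) keeps the $p$-part non-negative, and $|\vartheta(u)|\le\delta$ together with the hypothesis $G(x,t)\le 0$ for $|t|\le\delta$ yields $-\!\int G(x,\vartheta(u))\,dx\ge 0$, so $\Phi_1\ge 0$ directly.

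For part~(2), set $A_-:=\{v\in\M:\Psi(v)\le\lambda_k\}$ and $A_+:=\{v\in\M:\Psi(v)\ge\lambda_{k+1}\}$, and let $W_\pm\subset W^{1,\H}_0(\Omega)$ be the symmetric positively homogeneous cones over $A_\pm$. Since $A_-\cap A_+=\emptyset$, $W_-\cap W_+=\{0\}$; radial retractions of $W_-\setminus\{0\}$ onto $A_-$ and of $W\setminus W_+$ onto $\{\Psi<\lambda_{k+1}\}$, combined with~\eqref{9}, give $i(W_-\setminus\{0\})=i(W\setminus W_+)=k$. For the sign conditions, parametrize $u=tv$ with $v\in A_\pm$ and $t>0$, so that $\|\nabla u\|_p^p=t^p$. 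On $W_+$, $\Psi(v)\ge\lambda_{k+1}$ yields $\int|u|^p\le t^p/\lambda_{k+1}$, hence
\[
\Phi_1(u)\ge \frac{t^p}{p}\bigl(1-\lambda/\lambda_{k+1}\bigr)+\frac{t^q}{q}\int_\Omega a|\nabla v|^q\,dx-\int_\Omega G(x,tv)\,dx;
\]
in case~(i) the coefficient is strictly positive and absorbs the higher-order $G$-contribution, while in case~(ii) it vanishes but $-G\ge 0$ saves the estimate. On $W_-$, $\Psi(v)\le\lambda_k$ yields $\int|u|^p\ge t^p/\lambda_k$ and
\[
\Phi_1(u)\le \frac{t^p}{p}\bigl(1-\lambda/\lambda_k\bigr)+\frac{t^q}{q}\int_\Omega a|\nabla v|^q\,dx-\int_\Omega G(x,tv)\,dx;
\]
in case~(i) the first summand is strictly negative of order $t^p$ and dominates the $O(t^q)$-phase and $G$-terms, while in case~(iii), $\lambda=\lambda_k$ cancels it, but the hypothesis $G(x,t)\ge c|t|^s$ with $s\in(p,q)$ produces a contribution $-ct^s\!\int|v|^s$ that dominates the positive $t^q$-term for small $t$.

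The main technical obstacle is constructing $W_\pm$ inside the Musielak--Sobolev space $W^{1,\H}_0$ while preserving the index identities coming from~\eqref{9}, since the sublevels of $\Psi$ \emph{a priori} belong only to $W^{1,p}_0$. This will be handled by restricting to the smooth portion of each sublevel (using $C^{1,\alpha}$ regularity for $p$-Laplacian critical points, the Lipschitz continuity of $a$, and the balance~\eqref{p-q-cc}) and, if needed, by passing to the slightly enlarged sublevels $\{\Psi\le\lambda_k+\eps\}$ and $\{\Psi\ge\lambda_{k+1}-\eps\}$, whose indices remain $k$ by the continuity properties of the Fadell--Rabinowitz index. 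A secondary delicate point is the uniform control of $\int_\Omega a|\nabla v|^q\,dx$ over the chosen compact $k$-dimensional portion of $A_-$ needed in case~(iii).
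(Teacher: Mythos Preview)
Your overall strategy---reduce via Lemma~\ref{Lemma 1}, show $0$ is a local minimizer for part~(1), and build a cohomological local splitting for part~(2)---matches the paper's, and your analysis on the $W_+$ side is essentially correct. For the index identity $i(W\setminus W_+)=k$ in $W^{1,\H}_0(\Omega)$, however, the paper does not use your regularity/enlarged-sublevel route; it simply takes $W_+=\{u\in W^{1,\H}_0(\Omega):I(u)\ge\lambda_{k+1}J(u)\}$ and invokes Palais \cite[Theorem~17]{MR0189028}: since $W^{1,\H}_0(\Omega)\hookrightarrow W^{1,p}_0(\Omega)$ is a dense linear embedding, the open set $\{I<\lambda_{k+1}J\}$ has the same homotopy type in both spaces, hence the same $\Z_2$-index by~\eqref{9}.

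The genuine gap is your $W_-$. Taking $W_-$ to be the full cone over $A_-=\{\Psi\le\lambda_k\}$ does not work: elements of this cone need not lie in $W^{1,\H}_0(\Omega)$, and, more importantly, your claim that the double-phase contribution is ``$O(t^q)$'' tacitly assumes that $M_v:=\int_\Omega a|\nabla v|^q\,dx$ is \emph{uniformly bounded} over $v\in A_-$, which is false in general. Without such a bound no single $\rho>0$ makes $\Phi\le 0$ on all of $B_\rho\cap W_-$: if $M_v\to\infty$ one can keep $\|tv\|=\rho$ (forcing $t^qM_v\sim\rho^p$ by~\eqref{5}) while the positive term $\tfrac{t^q}{q}M_v\sim\rho^p/q$ stays bounded away from zero and the negative term $-c\,t^p\to 0$. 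This affects all three subcases, not only~(iii) as you suggest. Your proposed fix via $C^{1,\alpha}$ regularity of $p$-Laplacian \emph{critical points} does not help, since generic elements of $A_-$ are not critical points of anything. The paper instead appeals to Degiovanni--Lancelotti \cite[Theorem~2.3]{MR2514055}, which produces a \emph{smaller} symmetric cone $W_-\subset\{I\le\lambda_k J\}$ with $i(W_-\setminus\{0\})=k$ and with the key extra property that $\{u\in W_-:\norm[p]{u}=1\}$ is bounded in $C^1(\closure\Omega)$; this simultaneously places $W_-$ inside $W^{1,\H}_0(\Omega)$ and yields the uniform estimate $\int_\Omega a|\nabla u|^q\,dx\le C\,\norm[p]{u}^q$ on $W_-$ that drives the sign condition~\eqref{16}.
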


\begin{proof}
	We have
	\begin{equation} \label{14}
	\Phi(u) = \frac{1}{p}\, \big[I(u) - \lambda\, J(u)\big] + \int_\Omega \left[\frac{a(x)}{q}\, |\nabla u|^q - G(x,u)\right] dx.
	\end{equation}
	By \eqref{7} and the Sobolev embedding, we have 
	\begin{equation} \label{19}
	\int_\Omega G(x,u)\, dx = \o\big(\norm[p]{\nabla u}^p\big), \quad \text{as } \norm[p]{\nabla u} \to 0,
	\end{equation}
	and in view of Lemma \ref{Lemma 1}, without loss of generality, 
	we may assume that the sign conditions on $G$ in (1)({\em b}) and (2)({\em b}) hold for every $t \in \R$.
\vskip4pt
\noindent	
{\rm (1)} We show that $0$ is a local minimizer of $\Phi$. Since $\Psi(u) \ge \lambda_1$ for all $u \in \M$, we have
	\begin{equation} \label{15}
	I(u) \ge \lambda_1\, J(u), \quad \forall u \in W^{1,p}_0(\Omega).
	\end{equation}
\noindent	
	$\bullet$ ({\em a}) By \eqref{14}--\eqref{15}, we get
	\[
	\Phi(u) \ge \frac{1}{p} \left(1 - \frac{\lambda_+}{\lambda_1} + \o(1)\right) \norm[p]{\nabla u}^p, \quad \text{as } \norm[p]{\nabla u} \to 0,
	\]
	where $\lambda_+ = \max \set{\lambda,0}$. So $\Phi(u) \ge 0$ for all $u \in B_\rho$ for sufficiently small $\rho > 0$ by \eqref{5}.
	\vskip4pt
	\noindent
	$\bullet $ ({\em b}) By \eqref{14} and \eqref{15}, we get
	\[
	\Phi(u) \ge - \int_\Omega G(x,u)\, dx \ge 0, \quad \forall u \in W^{1,\H}_0(\Omega).
	\]
	\vskip4pt
	\noindent	
	{\rm (2)} We show that $\Phi$ has a cohomological local splitting near $0$ in dimension $k$ and then apply Proposition \ref{Proposition 3}.\ 
	In light of  \cite[Theorem 2.3]{MR2514055}, the set $\{u \in W^{1,p}_0(\Omega) : I(u) \le \lambda_k\, J(u)\}$ contains a {\em symmetric cone} $W_-$ with
	$i(W_- \setminus \set{0}) = k$
	and $\{u \in W_- : \norm[p]{u} = 1\}$ is bounded in $C^1(\Omega)$, so that, in particular, we have the inequality
	\begin{equation} \label{20}
	\int_\Omega \frac{a(x)}{q}\, |\nabla u|^q\, dx \le C \norm[p]{u}^q, \quad\,\,\, \forall u \in W_-,
	\end{equation}
	for some $C > 0$. Since $W^{1,\H}_0(\Omega)$ is embedded in $W^{1,p}_0(\Omega)$ as a {\em dense linear subspace}, the inclusion 
	$$
	\big\{u \in W^{1,\H}_0(\Omega) : I(u) < \lambda_{k+1}\, J(u)\big\} \incl \big\{u \in W^{1,p}_0(\Omega) : I(u) < \lambda_{k+1}\, J(u)\big\}
	$$ 
	is a {\em homotopy equivalence} by Palais (cf.\ \cite[Theorem 17]{MR0189028}), so
	\[
	i\big(\big\{u \in W^{1,\H}_0(\Omega) : I(u) < \lambda_{k+1}\, J(u)\big\}\big) = 
	i\big(\big\{u \in W^{1,p}_0(\Omega) : I(u) < \lambda_{k+1}\, J(u)\big\}\big) = k
	\]
	by virtue of \eqref{9}. We take now
	\[
	W_+ := \big\{u \in W^{1,\H}_0(\Omega) : I(u) \ge \lambda_{k+1}\, J(u)\big\}.
	\]
	It only remains to show that \eqref{16} holds for sufficiently small $\rho > 0$.
	\vskip4pt
	\noindent
	$\bullet$ ({\em a}) For $u \in W_+$, we obtain
	\[
	\Phi(u) \ge \frac{1}{p} \left(1 - \frac{\lambda}{\lambda_{k+1}} + \o(1)\right) \norm[p]{\nabla u}^p, \quad \text{as } \norm[p]{\nabla u} \to 0
	\]
	by virtue of \eqref{14} and \eqref{19}. So $\Phi(u) \ge 0$ for all $u \in B_\rho \cap W_+$ for sufficiently small $\rho > 0$ by \eqref{5}. For $u \in W_-$,
	\[
	\Phi(u) \le - \frac{1}{p} \left(\frac{\lambda}{\lambda_k} - 1 + \o(1)\right) \norm[p]{\nabla u}^p \quad \text{as } \norm[p]{\nabla u} \to 0
	\]
	by \eqref{14}, \eqref{19}, and \eqref{20} since $q > p$. So $\Phi(u) \le 0$ for all $u \in B_\rho \cap W_-$ for small $\rho > 0$ by \eqref{5}.
	\vskip2pt
	\noindent
	$\bullet$ ({\em b}) For $u \in W_+$, we have
	\[
	\Phi(u) \ge - \int_\Omega G(x,u)\, dx \ge 0
	\]
	by \eqref{14}, and $\Phi(u) \le 0$ for all $u \in B_\rho \cap W_-$ for small $\rho > 0$ as in ({\em a}).
	\vskip2pt
	\noindent
	$\bullet$ ({\em c}) We have $\Phi(u) \ge 0$ for all $u \in B_\rho \cap W_+$ for sufficiently small $\rho > 0$ as in ({\em i}). For $u \in W_-$,
	\[
	\Phi(u) \le C \norm[p]{u}^q - \frac{\norm[p]{u}^s}{C}
	\]
	for some $C > 0$ by \eqref{14}, \eqref{20}, and since $s > p$. Since $s < q$, then $\Phi(u) \le 0$ for all $u \in B_\rho \cap W_-$ for sufficiently small $\rho > 0$ by \eqref{5}.
\end{proof}

\subsection{Nontrivial solutions}

In this subsection we obtain a nontrivial solution of the problem
\begin{equation} \label{17}
\left\{\begin{aligned}
- \divg \left(|\nabla u|^{p-2}\, \nabla u + a(x)\, |\nabla u|^{q-2}\, \nabla u\right) & = \lambda\, |u|^{p-2}\, u + |u|^{r-2}\, u + h(x,u) && \text{in } \Omega\\
u & = 0 && \text{on } \bdry{\Omega},
\end{aligned}\right.
\end{equation}
where $\lambda \in \R$ is a parameter, $r \in (q,p^\ast)$, and $h$ is a Carath\'{e}odory function on $\Omega \times \R$ satisfying
\begin{equation} \label{18}
|h(x,t)| \le C \left(|t|^{\rho - 1} + |t|^{\sigma - 1}\right) \quad \text{for a.a. } x \in \Omega \text{ and all } t \in \R
\end{equation}
for some $p < \sigma < \rho < r$ and $C > 0$.
First we verify that the associated functional
\[
\Phi(u) = \int_\Omega \left[\frac{1}{p}\, |\nabla u|^p + \frac{a(x)}{q}\, |\nabla u|^q - \frac{\lambda}{p}\, |u|^p - \frac{1}{r}\, |u|^r - H(x,u)\right] dx, \quad u \in W^{1,\H}_0(\Omega),
\]
where $H(x,t) =\int_0^t h(x,s)\, ds$, satisfies the \PS{} condition. We note that
\begin{multline} \label{21}
q\, \Phi(u) - \langle \Phi'(u), u\rangle = \left(\frac{q}{p} - 1\right) \int_\Omega \big(|\nabla u|^p - \lambda\, |u|^p\big)\, dx + \left(1 - \frac{q}{r}\right) \int_\Omega |u|^r\, dx\\[5pt]
+ \int_\Omega \big(h(x,u)\, u - q\, H(x,u)\big)\, dx.
\end{multline}

\begin{lemma}[Palais-Smale condition] 
	\label{Lemma 3}
	Every sequence $\seq{u_j} \subset W^{1,\H}_0(\Omega)$ such that $\seq{\Phi(u_j)}$ is bounded and $\Phi'(u_j) \to 0$ has a convergent subsequence.
\end{lemma}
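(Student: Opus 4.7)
By Proposition~\ref{Proposition 4}, it is enough to show that any sequence $\seq{u_j}\subset W^{1,\H}_0(\Omega)$ with $\seq{\Phi(u_j)}$ bounded and $\Phi'(u_j)\to 0$ is bounded in $W^{1,\H}_0(\Omega)$. The starting point is the algebraic identity~\eqref{21}, whose left-hand side is controlled by $C+\o(1)\norm{u_j}$.

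First I would extract an a~priori bound on $\int_\Omega |u_j|^r\,dx$. The term $(q/p-1)\int_\Omega|\nabla u_j|^p\,dx$ is nonnegative, and the term $(1-q/r)\int_\Omega |u_j|^r\,dx$ has a positive coefficient since $q<r$. The remaining contributions on the right of~\eqref{21}, namely $(q/p-1)\lambda\int_\Omega|u_j|^p\,dx$ and $\int_\Omega(h(x,u_j)u_j-qH(x,u_j))\,dx$, are estimated using \eqref{18}, which yields
\[
|h(x,t)\,t-q\,H(x,t)|\le C\,(|t|^{\rho}+|t|^{\sigma}),
\]
together with Young's inequality: since $p,\sigma,\rho<r$, for every $\varepsilon>0$ there is $C_\varepsilon>0$ with
\[
\norm[p]{u_j}^p+\norm[\rho]{u_j}^{\rho}+\norm[\sigma]{u_j}^{\sigma}\le \varepsilon\,\norm[r]{u_j}^r+C_\varepsilon.
\]
Choosing $\varepsilon$ small enough to absorb these into $(1-q/r)\norm[r]{u_j}^r$ and dropping the nonnegative gradient term, I obtain
\[
\norm[r]{u_j}^r\le C+\o(1)\,\norm{u_j}.
\]

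Next I would use the boundedness of $\Phi(u_j)$ itself. Rearranging $\Phi(u_j)$ gives
\[
\tfrac{1}{q}\bigl(\norm[p]{\nabla u_j}^p+\norm[q,a]{\nabla u_j}^q\bigr)\le C+\tfrac{|\lambda|}{p}\norm[p]{u_j}^p+\tfrac{1}{r}\norm[r]{u_j}^r+C\bigl(\norm[\rho]{u_j}^{\rho}+\norm[\sigma]{u_j}^{\sigma}\bigr).
\]
Since $p,\sigma,\rho<r$, each of the $L^p$, $L^\rho$, $L^\sigma$ norms is dominated by $1+\norm[r]{u_j}^r$ up to constants, so combining with the previous step yields
\[
\norm[p]{\nabla u_j}^p+\norm[q,a]{\nabla u_j}^q\le C\bigl(1+\norm{u_j}\bigr).
\]

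Finally, I would invoke the norm equivalence~\eqref{5}. Assuming $\norm{u_j}\ge 1$ (otherwise there is nothing to prove), since $p<q$ one has $\min\{\norm{u_j}^p,\norm{u_j}^q\}=\norm{u_j}^p$, so
\[
\norm{u_j}^p\le C\bigl(1+\norm{u_j}\bigr),
\]
and $p>1$ forces $\seq{\norm{u_j}}$ to be bounded. Proposition~\ref{Proposition 4} then supplies a convergent subsequence.

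The main obstacle I expect is the interplay between the three scales present in the norm on $W^{1,\H}_0(\Omega)$ (the $L^p$, weighted $L^q$, and eventual $L^r$ norms). The convexity/Young step must be carried out carefully so that each subcritical term $|u_j|^p$, $|u_j|^{\rho}$, $|u_j|^{\sigma}$ is genuinely absorbed by the $L^r$-term coming from the $|u|^{r-2}u$ nonlinearity, and the $\o(1)\norm{u_j}$ error must survive into the final estimate, where it is tamed by $p>1$ via~\eqref{5}; this last point is the reason the argument works without any Ambrosetti--Rabinowitz condition on $h$.
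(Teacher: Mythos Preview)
Your proof is correct and follows essentially the same route as the paper's: reduce to boundedness via Proposition~\ref{Proposition 4}, use the identity~\eqref{21} together with Young's inequality (exploiting $p,\sigma,\rho<r$) to get $\norm[r]{u_j}^r\le C+\o(\norm{u_j})$, feed this back into the expression for $\Phi(u_j)$ to control the modular $\norm[p]{\nabla u_j}^p+\norm[q,a]{\nabla u_j}^q$, and conclude via~\eqref{5} and $p>1$. The paper's argument is the same, only more tersely written.
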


\begin{proof}
	It suffices to show that $\seq{u_j}$ is bounded by Proposition \ref{Proposition 4}. Since $p < q < r$ and $\sigma < \rho < r$, it follows from \eqref{21}, \eqref{18} and the H\"{o}lder and Young's inequalities that $\norm[r]{u_j}^r \le C + \o(\norm{u_j})$ for some $C > 0$. Then
	\begin{equation*}
	\int_\Omega \left[\frac{1}{p}\, |\nabla u_j|^p + \frac{a(x)}{q}\, |\nabla u_j|^q\right] dx = \Phi(u_j) + \int_\Omega \left[\frac{\lambda}{p}\, |u_j|^p + \frac{1}{r}\, |u_j|^r + H(x,u_j)\right] dx
	\le C + \o(\norm{u_j}),
	\end{equation*}
	which together with \eqref{5} gives the desired conclusion.
\end{proof}

Next we study the structure of the sublevel sets of $\Phi$ at infinity.

\begin{lemma} \label{Lemma 2}
	There exists $\alpha < 0$ such that the sublevel set 
	$$
	\Phi^\alpha := \bgset{u \in W^{1,\H}_0(\Omega) : \Phi(u) \le \alpha}
	$$
	 is contractible in itself.
\end{lemma}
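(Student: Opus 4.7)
The plan is to construct, for $\alpha$ sufficiently negative, a strong deformation retract of $\Phi^\alpha$ onto the level set $\Sigma_\alpha := \Phi^{-1}(\alpha)$, and then to identify $\Sigma_\alpha$ with the unit sphere of $W^{1,\H}_0(\Omega)$, which is contractible since the ambient space is infinite-dimensional.

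First I would establish a one-sided derivative estimate on $\Phi^\alpha$. Solving \eqref{21} for $\langle \Phi'(u), u\rangle$ and using \eqref{18} together with Young's inequality -- valid because $p$, $\rho$, $\sigma$ are all strictly less than $r$ -- to absorb the lower-order terms into a small multiple of $\norm[r]{u}^r$ at the cost of a uniform additive constant, I expect to arrive at
\[
\langle \Phi'(u), u\rangle \le q\, \Phi(u) + K - \frac{1 - q/r}{2}\, \norm[r]{u}^r, \qquad \forall u \in W^{1,\H}_0(\Omega),
\]
with $K > 0$ independent of $u$. This forces $\langle \Phi'(u), u\rangle < 0$ on $\Phi^\alpha$ provided $\alpha < -K/q$.

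Next, for each nonzero $u \in W^{1,\H}_0(\Omega)$ I would analyze the scalar function $\psi_u(t) := \Phi(tu)$. Since $p, q, \rho, \sigma < r$ and $\norm[r]{u}^r > 0$, the term $-t^r\norm[r]{u}^r/r$ dominates at infinity, so $\psi_u(t) \to -\infty$ as $t \to \infty$, whereas $\psi_u(0^+) = 0$. The preceding estimate forces $\psi_u'(t) = t^{-1}\langle\Phi'(tu), tu\rangle < 0$ wherever $\psi_u(t) \le \alpha$, so $\psi_u$ crosses the level $\alpha$ exactly once at a unique value $T(u) > 0$. The implicit function theorem (applicable since $\psi_u'(T(u)) < 0$) then gives continuity of $T : W^{1,\H}_0(\Omega)\setminus\set{0} \to (0,\infty)$.

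Finally, since $\Phi(0) = 0 > \alpha$, every $u \in \Phi^\alpha$ is nonzero with $T(u) \le 1$, so the map
\[
\eta(u,s) := \big[(1-s) + s\, T(u)\big]\, u, \qquad (u,s) \in \Phi^\alpha \times [0,1],
\]
scales $u$ along its ray by a factor in $[T(u),1] \subset [T(u),\infty)$ and therefore remains in $\Phi^\alpha$ by the preceding monotonicity. Since $\eta(\cdot, 0) = \id$, $\eta(\Phi^\alpha, 1) \subset \Sigma_\alpha$, and $\eta$ fixes $\Sigma_\alpha$, it is a strong deformation retract of $\Phi^\alpha$ onto $\Sigma_\alpha$. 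The radial projection $v \mapsto v/\norm{v}$ is then a homeomorphism from $\Sigma_\alpha$ onto the unit sphere of $W^{1,\H}_0(\Omega)$ with inverse $w \mapsto T(w)\, w$; this sphere is contractible in infinite dimensions (classical), and hence so is $\Phi^\alpha$. I expect the main obstacle to be the derivative estimate of the first step: once a single uniform constant $K$ is secured, the remaining geometric argument reduces to routine bookkeeping.
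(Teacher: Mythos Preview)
Your argument is correct and shares the paper's essential mechanism: the same derivative inequality $\langle\Phi'(u),u\rangle \le q\,\Phi(u) + K$ (obtained from \eqref{21}, \eqref{18} and Young's inequality) forces $\langle\Phi'(u),u\rangle<0$ on $\Phi^\alpha$ for $\alpha$ sufficiently negative, after which both proofs exploit the ray structure and the implicit function theorem.

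The only difference lies in the final topological step. The paper works outward: it defines $t(u)=\inf\{t\ge 1:\Phi(tu)\le\alpha\}$ and observes that $u\mapsto t(u)\,u$ is a retraction of $W^{1,\H}_0(\Omega)\setminus\{0\}$ onto $\Phi^\alpha$; since the punctured infinite-dimensional space is contractible, so is its retract. You instead work inward, deformation-retracting $\Phi^\alpha$ onto the level set $\Sigma_\alpha$ and then identifying $\Sigma_\alpha$ with the unit sphere. The paper's route is slightly shorter (one retraction and a classical fact, no intermediate homeomorphism), while yours makes the radial picture more explicit by showing each ray meets $\Sigma_\alpha$ exactly once. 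Both rely on the same infinite-dimensional input (contractibility of $W\setminus\{0\}$, equivalently of the unit sphere), so neither is more elementary in substance.
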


\begin{proof}
	Since $p < q < r$ and $\sigma < \rho < r$, it follows from \eqref{21}, \eqref{18}, and the Young's inequality that $\langle\Phi'(u), u\rangle - q \Phi(u)$ is bounded from above, so for $\alpha < 0$ with $|\alpha|$ sufficiently large,
	\begin{equation} \label{22}
	\langle \Phi'(u), u\rangle  < 0, \quad \forall u \in \Phi^\alpha.
	\end{equation}
	For $u \in W^{1,\H}_0(\Omega) \setminus \set{0}$, taking into account that $\Phi(tu) \to - \infty$ as $t \to + \infty$, set
	\[
	t(u) = \inf \set{t \ge 1 : \Phi(tu) \le \alpha},
	\]
	and note that the function $u \mapsto t(u)$ is continuous by \eqref{22} and the implicit function theorem. Then the map $u \mapsto t(u)\, u$ is a {\em retraction} of $W^{1,\H}_0(\Omega) \setminus \set{0}$ onto $\Phi^\alpha$, and the conclusion follows since the former is contractible in itself.
\end{proof}

\subsection{Proof of Theorem~\ref{main} completed}
We are now ready to prove the main result. Let $\seq{\lambda_k}$ be the sequence of eigenvalues of problem \eqref{8} defined in \eqref{23}.
%
%
	Suppose that $0$ is the only critical point of $\Phi$. Taking $U = W^{1,\H}_0(\Omega)$ in \eqref{24}, we have
	\[
	C^q(\Phi,0) = H^q(\Phi^0,\Phi^0 \setminus \set{0}).
	\]
	Let $\alpha < 0$ be as in Lemma \ref{Lemma 2}. Since $\Phi$ has no other critical points and satisfies the \PS{} condition by Lemma \ref{Lemma 3}, $\Phi^0$ is a deformation {\em retract} of $W^{1,\H}_0(\Omega)$ and $\Phi^\alpha$ is a 
	deformation {\em retract} of $\Phi^0 \setminus \set{0}$ by the second deformation lemma. So
	\[
	C^q(\Phi,0) \isom H^q(W^{1,\H}_0(\Omega),\Phi^\alpha) = 0 \quad \forall q\in\N,
	\]
	since $\Phi^\alpha$ is contractible in itself, contradicting Theorem \ref{Theorem 1} in each of the cases (1)--(3).
\qed

\def\cdprime{$''$}

\bigskip

\bigskip

\end{document}